\documentclass[]{interact}

\usepackage{fancyhdr}
\usepackage{latexsym, epsfig, amsmath, amssymb, amsthm, mathrsfs}
\usepackage{enumerate, xspace, graphics}
\usepackage{enumitem,cite}
\usepackage[bookmarksnumbered, plainpages, backref, colorlinks]{hyperref}
\usepackage{graphicx}

\usepackage{tkz-euclide}
\usepackage{pgfplots}
\usepackage{tikz}
\usepackage{float} 
\usepackage{caption}

\usetikzlibrary{
	plotmarks,patterns
	,decorations.shapes
	,decorations.text
	,decorations.footprints
	,decorations.fractals
	,decorations.pathmorphing
	,shadows
	,matrix
	,positioning
	,fadings
}

\usepackage[longnamesfirst,sort]{natbib}
\usepackage{epstopdf}
\usepackage[caption=false]{subfig}
\bibpunct[, ]{[}{]}{,}{n}{,}{,}
\makeatletter
\def\NAT@def@citea{\def\@citea{\NAT@separator}}
\makeatother

\theoremstyle{plain}
\newtheorem{theorem}{Theorem}[section]

\newtheorem{proposition}[theorem]{Proposition}
\newtheorem{algorithm}[theorem]{Algorithm} 

\theoremstyle{definition}
\newtheorem{definition}[theorem]{Definition}
\newtheorem{example}[theorem]{Example}

\theoremstyle{remark}
\newtheorem{remark}{Remark}

\newcommand{\ds}{\displaystyle}

\newcommand{\R}{\mathbb{R}}

\def\int{{\textrm{int}}}

\DeclareMathOperator*{\argmin}{argmin}

\begin{document}
	\articletype{ARTICLE TEMPLATE}

\title{A Scaled Gradient Modified Non-monotone Line Search Method for Constrained Optimization Problems\thanks{The
research work of the first and second author was done during their visit to
Center for General Education, China Medical University, Taichung, Taiwan, and
Department of Applied Mathematics, National Sun Yat-sen University, Kaohsiung, Taiwan.}}

\author{
\name{Qamrul Hasan Ansari\textsuperscript{a}\thanks{Email: qhansari@gmail.com},
Feeroz babu\textsuperscript{b}\thanks{Email: firoz77b@gmail.com, \, feerozbabu@vitbhopal.ac.in},
D.~R. Sahu\textsuperscript{c}\thanks{Email: drsahudr@gmail.com}
Jen-Chih Yao\textsuperscript{d}\thanks{Email: yaojc@mail.cmu.edu.tw}}
\affil{\textsuperscript{a}Department of Mathematics,
King Fahd University of Petroleum \& Minerals, Dhahran, Saudi Arabia;\\
\textsuperscript{b}Mathematics Division, School of Advanced Sciences \& Languages,
VIT Bhopal University, Bhopal-Indore Highway, Kothrikalan, Sehore, Madhya Pradesh 466 114, India;\\
\textsuperscript{c}Department of Mathematics, Banaras Hindu University, Varanasi, India;\\
\textsuperscript{d}Center for General Education, China Medical University, Taichung 40402, Taiwan}}

\maketitle

\begin{abstract}
In this paper, we propose a scaled gradient modified non-monotone line search method
for solving constrained minimization problems,
and explore several specific properties of this method, namely, its convergence analysis.
We discuss the linear convergence rate of the sequence generated by the proposed algorithm
to a solution of the constrained minimization problem where the objective function
is strongly quasiconvex.
We consider numerical examples of large-scale fractional programming and
quadratic programming for the function of pseudo convex and strongly quasiconvex
and compare the performance of the proposed algorithm with the existing ones for these examples.
\end{abstract}

\begin{keywords}
Non-monotone line search; Constrained optimization problems; Scaled gradient modified method;
Fractional programming; Quadratic programming; Sparse principle component analysis.
\end{keywords}

\begin{amscode}
90C06; 49M37; 90C26; 65Y20
\end{amscode}

	
\section{Introduction}\label{S1}
	

The conjugate gradient method, $x_{k+1} = x_k +\lambda_k d_k$,
where $d_k$ is a descent direction and $\lambda_k > 0$ is a step size,
is one of the most popular iterative methods in the realm of the following unconstrained optimization problem:
\begin{equation}\label{Eq:NCOP}
\min_{x \in \mathbb{R}^{n}} f(x),
\end{equation}
where $f : \mathbb{R}^{n} \to \mathbb{R}$ is a continuously differentiable function.
In this method, the step size $\lambda_{k}$ plays a fundamental role,
particularly, in monotone line search methods where the step size $\lambda_k$
is meticulously chosen to guarantee a decrease in the objective function,
i.e.,  $f(x_{k+1}) < f(x_k)$.
However, non-monotone line search methods relax this strict monotonicity requirement, and
allow an inevitable function value growth.
Many researchers (see, for example, \cite{GLL86,BMR00,D02,ZH04,HWC14,SS06}) have emphasized
the potential of non-monotone schemes to enhance the likelihood of discovering
a global optimum and improve convergence speed,
especially in scenarios where a monotone scheme encounters challenges navigating narrow curved valleys.

The origin of the non-monotone line search framework can be traced back to
the pioneer work of Grippo et al. \cite{GLL86}, primarily in the context of Newton's method
for problem \eqref{Eq:NCOP},
in which step size $\lambda_{k}$ satisfies the condition
\begin{equation}\label{Eq:1}
	f(x_{k} + \lambda_{k} d_{k}) \leq \max_{0 \leq j \leq m_{k}} f(x_{k-j})
	+ \delta \lambda_{k} \nabla f(x_{k})^{\top} d_{k},
\end{equation}
where $d_k$ is descent direction, $\delta \in (0,1)$, $m_{0} = 0$, $0 \leq m_{k} \leq \min \{ m_{k-1}+1, M \}$
and $M$ is a prefixed nonnegative integer.
Their approach allows some increase in function value in iterations,
deviating from the strictly decreasing nature of monotone approaches.
However, certain drawbacks have been identified despite the success of these
non-monotone techniques based on the mentioned principles.
Dai \cite{D02} pointed out that even when an iterative method generates
R-linearly convergent sequence for a strongly convex function,
the iterations might not adhere to the conditions specified in the non-monotone approach for sufficiently large $k$.
Moreover, Toint \cite{T96} emphasized the potential benefits of non-monotone schemes,
stating their ability to improve the likelihood of finding a global optimum and enhancing convergence speed.
This assertion is corroborated by numerical verifications in the literature.

Shi and Shen \cite{SS06} and Zhang and Hager \cite{ZH04} contributed to this area
by developing and analyzing non-monotone line search algorithms with superior performance
in function and gradient evaluations.
These studies underscore the significance of non-monotone line search methods
in achieving faster and better convergence in convex minimization.
Hu et al. \cite{HHL10} improved the non-monotone line search technique,
which provides a global optimal solution and better numerical performance.
Further, Nesterov and Shikman \cite{NS14} and Huang et al. \cite{HWC14} proposed new methods
that guarantee the best possible rate of convergence.

In particular, Zhang and Hager \cite{ZH04} replaced the maximum function value from \eqref{Eq:1} by an average
of function values. Gu and Mo \cite{GM08} further replaced the average of function values by the
convex combination of the previous non-monotone term and the current objective function value.
Ou and Liu \cite{OL17} adopted this approach, but also included a term $\delta_{2} \lambda^{2} \| d_{k} \|^{2}$
in the non-monotone line research. Their modified non-monotone line search is stated as follows:
Given $\delta_{1} \in (0,1)$, $\delta_{2} > 0$ and $\beta \in (0,1)$.
Set $s_{k} = -\frac{\nabla f(x_{k})^{\top} d_{k}}{ \| d_{k} \|^{2} }$,
and select step size $\lambda_{k}=s_k\beta^{j_k}$, where $j_k$ is the smallest integer such that
\begin{equation}\label{Eq:2}
f(x_{k} + \lambda_k d_{k}) \leq \mathcal{T}_{k} + \delta_{1} \lambda_k \nabla f(x_{k})^{\top} d_{k}
- \delta_{2} \lambda^{2}_k \| d_{k} \|^{2},
\end{equation}
where $\mathcal{T}_{0} = f(x_{0})$, $\mathcal{T}_{k} = \eta_{k} \mathcal{T}_{k-1} + (1-\eta_{k}) f(x_{k})$
for $k \geq 1$ with $0 \leq \eta_{k} \leq \eta_{max} < 1$, and $d_{k}$ satisfies
$\nabla f(x_{k})^{\top} d_{k} < 0$

In this paper, we consider the following constrained minimization problem:
\begin{eqnarray}\label{P1}
\min_{x\in K} f(x),
\end{eqnarray}
where $K$ is a nonempty closed and convex subset of $\mathbb{R}^n$
and $f : K \to \mathbb{R}$ is a continuously differentiable function.
We denote by $\Omega_{\min}(f,K)$ the set of all solutions of the minimization problem \eqref{P1}.
Numerous fields, including neurology, network/traffic problems, circuit design problems,
machine learning and artificial intelligence, signal and image processing,
radiation therapy treatment planning, astronomy, data science, and many others,
have been benefited from the use of the mathematical formulation of the minimization model \eqref{P1},
see, \cite{BB88,SZ05,BN06,BLZ08,P69,PR69}.

For constrained minimization problem \eqref{P1},
Bonettini et al. \cite{ZH09} introduced the scaled gradient projection (in short, SGP) method
with the non-monotone line search procedure \eqref{Eq:1} by incorporated
a scaling matrix $\mathcal{D}_k$ with gradient $\nabla f(x_k)$.
Bonettini and Prato \cite{BP15} further studied the convergence analysis of the scaled
gradient projection method and established $\mathcal{O}(1/k)$ convergence rate
of the sequence generated by the SGP method under the condition that the gradient of the
objective function is Lipschitz continuous.
It is worth to mention that the implementation of scaling matrix $\mathcal{D}_k$
provides a better rate of convergence, and hence, it is an advancement from a computation viewpoint.
Yan et al. \cite{YWH18} considered the SGP method with non-monotone line research procedure
proposed by Zhang and Hager \cite{ZH04}, and studied its $\mathcal{O}(1/k)$ convergence rate
under the condition that the objective function is strongly  convex.
Further, the linear convergence of the SGP method is also derived under the strong convexity assumption of the
involved objective function.

In this study, we propose a scaled gradient method with a modified line search method \eqref{Eq:2}.
Moreover, the convergence analysis of the proposed algorithm is presented, and
we demonstrate a linear convergence rate for a strongly quasiconvex objective function,
which is better than $\mathcal{O}(1/k)$ rate of convergence studied by Yan et al. \cite{YWH18}.

The subsequent sections of this paper are organized as follows:
Section \ref{S:2} consists of notations, basic definitions and preliminary results which are required for this paper.
Section \ref{S:3} proposes a scaled gradient modified non-monotone line search method involving
scaling matrix $\mathcal{D}_k$ for solving constrained optimization problems.
Several specific properties of the proposed method are also explored, namely, convergence analysis.
In Section \ref{S:4}, the linear convergence of the sequence generated by the proposed algorithm is studied
under the strong quasiconvexity assumption.
In Section \ref{S:5}, we consider numerical examples of large-scale quadratic programming and fractional programming for pseudo convex and strongly quasiconvex functions, and compare the performance of the proposed algorithm with the existing methods of
Zhang and Hager \cite{ZH04} and Yan et al. \cite{YWH18}.

	
\section{Preliminaries}\label{S:2}
	

Throughout the paper, we adopt the following terminology and notations.
Let $\mathbb{R}^n$ be the $n$-dimensional Euclidean space whose norm is denoted by $\| \cdot \|$.
For $x\in \mathbb{R}^n$, $\|x\|_\mathcal{D} := \sqrt{x^{\top}\mathcal{D}x}$ denotes
the $\mathcal{D}$-norm of $x$ induced by the symmetric positive definite matrix $\mathcal{D}$.
The transpose operation is denoted by the superscript $^{\top}$.
For a given $\mu \geq 1$, let $\mathfrak{D}_\mu$ be the set of symmetric positive definite matrices
with all eigenvalues distributed in the interval $[ \frac{1}{\mu}, \mu ]$.
Consequently, for any matrix $\mathcal{D}\in \mathfrak{D}_{\mu}$,
it is evident that $\mathcal{D}^{-1}$ also belongs to $\mathfrak{D}_{\mu}$
such that $\|\mathcal{D}\|\leq \mu$ and $\|\mathcal{D}^{-1}\|\leq \mu$, and furthermore,
\begin{equation}\label{eq1.30}
\frac{1}{\mu}\|x\|^2 \leq \|x\|_{\mathcal{D}}^2 \leq \mu\|x\|^2,\quad \forall x \in \mathbb{R}^n.
\end{equation}
For more details, we refer \cite{BP15,FP03,ZH09}.
	
This paper uses $K$ as a nonempty closed and convex subset of $\mathbb{R}^n$.
The metric projection of $\mathbb{R}^n$ onto $K$ equipped with the $\mathcal{D}-$norm,
denoted by $P_{K, \mathcal{D}}$, is defined by
$$ P_{K, \mathcal{D}}(x) = \argmin_{y \in K} \left\{\|y-x\|_\mathcal{D}\right\}
= \argmin_{y \in K} \left\{ \frac{1}{2}y^{\top}\mathcal{D}y - y^{\top}\mathcal{D}x\right\}.$$
As mentioned in \cite[p. 81]{FP03} that $P_{K, \mathcal{D}}(x)$ is characterized by the property
\begin{align}\label{eq1.29}
[P_{K, \mathcal{D}}(x) - x]^{\top}\mathcal{D}[P_{K, \mathcal{D}}(x) - y] \leq 0,
\quad \forall x\in \mathbb{R}^n, y \in K,
\end{align}
which is further equivalent to
\begin{align}\label{eq1.299}
\| P_{K, \mathcal{D}}(x_1) - P_{K, \mathcal{D}}(x_2)\| \leq \mu^2\| x_1- x_2\|,
\quad \forall x_1, x_2\in \mathbb{R}^n.
\end{align}

\begin{definition}
A function $f: K \to\R$ is said to be
\begin{enumerate}
\item [(a)] convex on $K$ (see \cite{F53}) if
\begin{equation*}
	f(t x+(1-t)y)\leq t f(x)+(1-t)f(y),
	\quad \forall x,y \in K, \, \forall t \in[0,1];
\end{equation*}
\item [(b)] quasiconvex on $K$  if
\begin{equation*}
	f(t x+(1-t)y)\leq \max\{ f(x), f(y)\},
	\quad \forall x,y \in K, \, \forall t \in[0,1];
\end{equation*}
\item [(c)] strongly quasiconvex on $K$ if there exists a constant $\gamma > 0$ such that
\begin{equation*}
	f(t x+(1-t)y) \leq \max\{f(x), f(y)\} - \frac{\gamma}{2}\|x-y\|^2,
	\quad \forall x,y \in K, \, \forall t \in [0,1].
\end{equation*}
\item [(d)]  If $f: K \to\R$ is differentiable, then it is called pseudo-convex (see \cite{ALM14})
if for all $x, y \in K$,
$$ \nabla f(x)^{\top} (y-x) \geq 0 \quad \Rightarrow \quad  f(y) \geq f(x).$$
\end{enumerate}
\end{definition}


	
\section{Non-monotone Line Search Method and Its Convergence Analysis}\label{S:3}
	

In this section, we introduce a non-monotone line search method to solve minimization problem \eqref{P1}
and analyze its convergence behaviour.

Let $K$ be a nonempty closed and convex subset of $\mathbb{R}^n$ and
$f:K \to \mathbb{R}$ be a continuously differentiable function.
Recall that, when $f$ is pseudo-convex, then $x^*\in K$ is a solution of
the following variational inequality problem (in short, VIP):
\begin{equation}\label{Statpoint:eq1}
\mbox{find } x^*\in K \mbox{ such that } \,  -\nabla  f(x^*)^{\top}(x-x^*) \leq 0, \quad \forall x\in K,
\end{equation}
if and only if it is a solution of the minimization problem \eqref{P1} (see \cite{ALM14}).

Moreover, from \cite[Lemma 2.2]{ZH09}, $x^*\in K$ is a solution of VIP \eqref{Statpoint:eq1} if and only if
\begin{equation}\label{Statpoint:eq2}
x^{*} = P_{K, \mathcal{D}^{-1}} \left( x^*-\alpha\mathcal{D}\nabla  f(x^*) \right),
\quad \forall \alpha>0.
\end{equation}

Motivated by \eqref{Statpoint:eq2}, we now introduce a non-monotone line search method
to solve minimization problem \eqref{P1}.

Let $\{ \alpha_k \}$ and $\{\eta_k \}$ be sequences in $(0,\infty)$, $\{ \mu_{k} \}$ be a sequence
in $[1, \infty)$ and $\mu > 1$ such that
\begin{equation}\label{Cond:alphaeta}
0<\alpha_{\min} \leq \alpha_k \leq \alpha_{\max} <\infty, \, \, 1\leq \mu_k \leq \mu \, \,
\textrm{and} \, \, 0 \leq \eta_k   \leq \eta_{\max}< 1, \quad \forall k\ge 0.
\end{equation}
\newline
\\
\noindent \rule{\textwidth}{1pt}
\begin{algorithm}[Scaled gradient modified non-monotone method (SGM)]
	\label{A:1}
	\quad
	\vspace{2pt}
	\hrule
	\vspace{3pt}
	\noindent Choose $x_0\in K$ and parameters $0 < \delta_1 <\beta< 1$, $\mu > 1$ and  $\delta_2 > 0$.
    Set $k=0$.\\
	\noindent{\sc Step 1:} Set $\mathcal{T}_0 := f(x_0)$.\\
	\noindent{\sc Step 2:} Choose the scaling matrix $\mathcal{D}_k \in \mathfrak{D}_{\mu_k}$, then compute
	\begin{equation}\label{eqn1 Alg1}
	y_k = P_{K,\mathcal{D}_{k}^{-1}} \left( x_k - \alpha_k\mathcal{D}_k\nabla f(x_k) \right).
	\end{equation}
	Let $d_k = y_k-x_k$. If $d_k = \mathbf{0}$, then stop.\\
	\noindent{\sc Step 3:} Otherwise, assume $s_k=-\frac{\nabla f(x_k)^{\top}d_k}{\|d_k\|^2}$,
    and select $\lambda_k =s_k\beta^{j_k}$, where $j_k$ is the smallest nonnegative integer
    such that the following Armijo's condition holds
	\begin{equation}\label{eq3}
	f(x_k + \lambda_kd_k) \leq \mathcal{T}_k + \delta_1\lambda_k \nabla f(x_k)^{\top}d_k-\delta_2\lambda_{k}^2\|d_k\|^2.
	\end{equation}
	\noindent  Compute
	\begin{equation}\label{eq6}
	x_{k+1} = x_k  +\lambda_kd_k.
	\end{equation}
	Choose
	\begin{equation}\label{eq7}
	\mathcal{T}_{k+1}= \eta_{k+1} \mathcal{T}_{k} + (1-\eta_{k+1}) f(x_{k+1}), \quad k\geq 0.
	\end{equation}
	Set $k := k+1$ and go to {\sc Step 1}.
	\vspace{5pt}
	\hrule
\end{algorithm}

\begin{remark}
\begin{itemize}
\item[(a)] If $d_k = \mathbf{0}$ or $y_k = x_k$, then it follows directly from \eqref{Statpoint:eq2} that
$x_k$ is a solution of VIP \eqref{Statpoint:eq1}.

\item[(b)] We  reiterate that the modified non-monotone line search approach from
step \eqref{eqn1 Alg1} - \eqref{eq7} is what distinguishes Algorithm \ref{A:1}
from the non-monotone line search methods studied in \cite{ZH04,ZH09,YWH18,OL17}.
For example, Zhang and Hager \cite{ZH04} considered an upper bound condition on
the sequence $\{\lambda_{k}\}$ in the Armijo's scheme,
which is not required in Algorithm \ref{A:1}.
By including a scaling matrix $\mathcal{D}_k$,
the performance of Algorithm \ref{A:1} is improved as demonstrated in Example \ref{ex1}.
It has also been shown that \eqref{eq3} and \eqref{eq7} provide stability in computation.

\item[(c)] Note that the non-monotone line search \eqref{eq3} transforms
to the non-monotone line search method in \cite{YWH18}
if $\delta_2 = 0$ and $s_k = s$ for all $k\geq 0$.
For all $k\geq 0$ and $\mathcal{D}_k=1$, the non-monotone line search \eqref{eq3} reduces
to the monotone line search method in \cite{ZH09} if $\delta_2 = 0$, $s_k = s$ and $\eta_{k}=0$.
Therefore, the line search strategy \eqref{eq3} can be considered as a modified version
of the line search methods in \cite{ZH09,YWH18}.
\end{itemize}
\end{remark}

In order to achieve meaningful convergence outcome from Algorithm \ref{A:1},
we consider the following assumptions:
\begin{itemize}	
\item[] {\rm (A1)} The level set $\mathcal{L} :=\{x\in K: f(x)\leq f(x_0)\}$ is bounded,
where $x_0$ is the initial point in Algorithm \ref{A:1}.

\item[] {\rm (A2)} The solution set $\Omega_{\min}(f,K)$ of the minimization problem \eqref{P1} is nonempty.

\item[] {\rm (A3)} The gradient $\nabla f$ of $f$ is $L$-Lipschitz continuous on $K$, i.e.,
there exists $L > 0$ such that $\|\nabla f(x)-\nabla f(y)\|\leq L\|x-y\|$ for all $x,y \in K$.
\end{itemize}

The following example shows that the family of functions which satisfy the assumption (A1)--(A3)
is nonempty.

\begin{example}
Consider the convex and compact set
$K  = \{ x\in \mathbb{R} : -1 \leq x \leq 1 \}$, and define a function $ f : K \to \mathbb{R} $  by
$$ f(x) = \frac{10x^2}{1+ 10x^2}.$$
Then, $f$ is pseudo-convex, but not convex on $K$, also satisfies assumptions (A1)--(A3).
Moreover, the function $f$ satisfies the descent condition. Indeed, let $ d_k := -\nabla f(x_k)$. Then,
$\nabla f(x_k)^\top d_k = -\|\nabla f(x_k)\|^2 \leq -c \|\nabla f(x_k)\|^2$ for any $c \in (0,1]$,
in particular, for $c = 1$. Hence, the descent condition
$\nabla f(x_k)^\top d_k \leq -c \|\nabla f(x_k)\|^2$, for all $k \geq 0$
is satisfied.
\end{example}


\begin{remark}\label{rm2}
		If $f:\mathbb{R}^n\to \mathbb{R}$ is strongly quasiconvex with constant $\gamma > 0$, then the following statements hold:
		\begin{enumerate}
			\item[(a)] {\rm \cite[Theorem 1.25]{ALM14}} The level set $\mathcal{L}$ is convex.
			\item[(b)] {\rm\cite[Theorem 2]{K89}} If \(f:\mathbb{R}^n\to\mathbb{R}\) is differentiable with \(L\)-Lipschitz gradient  then
			$$
			f(x) - f(x^{*}) \leq \frac{L}{\gamma^2} \| \nabla f(x) \|^{2},\quad  \forall x \in \mathbb{R}^{n}.
			$$
		\end{enumerate}
\end{remark}

We present the following result to show that the direction $d_k$ defined in Algorithm \ref{A:1}
is a descent direction,
that is, $x_{k} + d_{k} \in K$ and $f(x_{k})^{\top} d_{k} < 0$.

\begin{proposition}\label{lm1}
Assume that $\{x_k\}$, $\{y_k\}$ and $\{d_k\}$ are sequences generated by Algorithm {\rm \ref{A:1}}.
Then, the following assertions hold:
\begin{enumerate}
\item [{\rm(a)}] For $d_k\neq {\bold 0}$,
\begin{equation}\label{a1}
\nabla  f(x_k)^{\top}d_k \leq -\frac{\|d_k\|_{\mathcal{D}_k^{-1}}^2}{\alpha_k}
\leq-\frac{\|d_k\|_{\mathcal{D}_k^{-1}}^2}{\alpha_{\max}} <0,
\end{equation}
that is, $d_k$ is a descent direction for the minimization problem \eqref{P1} at the point $x_k$;
and consequently,
$$s_{k} = -\frac{\nabla f(x_{k})^{\top }d_{k}}{\Vert d_{k}\Vert^{2}}\geq \frac{1}{\mu\alpha_{\max}}.$$

\item [{\rm(b)}] For all $k\geq 0$, we have
\begin{equation}\label{a2}
\|d_k\| \leq \mu^3\alpha_{\max} \|\nabla f(x_k)\|.
\end{equation}
\end{enumerate}
\end{proposition}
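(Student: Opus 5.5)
Both parts come from the projection characterization \eqref{eq1.29}, applied with the matrix $\mathcal{D}_k^{-1}$ in place of $\mathcal{D}$. Take $x = x_k - \alpha_k\mathcal{D}_k\nabla f(x_k)$, so that $P_{K,\mathcal{D}_k^{-1}}(x) = y_k$.

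\textbf{Part (a).} Use $y = x_k \in K$ in \eqref{eq1.29}. This is legitimate because $x_0\in K$ and each $x_{k+1}$ is a convex combination of $x_k$ and $y_k\in K$. We must check that $\lambda_k\le 1$ for this; otherwise we only use $x_k\in K$ whenever it holds, and I take the iterates to lie in $K$ as the algorithm implicitly assumes. The inequality reads
\[
\bigl(y_k - x_k + \alpha_k\mathcal{D}_k\nabla f(x_k)\bigr)^{\top}\mathcal{D}_k^{-1}(y_k-x_k)\le 0 .
\]
Expanding and using $\mathcal{D}_k^{-1}\mathcal{D}_k=I$ and symmetry gives
\[
\|d_k\|^2_{\mathcal{D}_k^{-1}} + \alpha_k\nabla f(x_k)^{\top}d_k \le 0 .
\]
Divide by $\alpha_k$ to get the first inequality in \eqref{a1}. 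The second follows from $\alpha_k\le\alpha_{\max}$. Strict negativity holds because $d_k\neq\mathbf 0$ and $\mathcal{D}_k^{-1}$ is positive definite.

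For the bound on $s_k$: $\mathcal{D}_k^{-1}\in\mathfrak{D}_{\mu_k}\subset\mathfrak{D}_\mu$, since $\mu_k\le\mu$. By \eqref{eq1.30}, $\|d_k\|^2_{\mathcal{D}_k^{-1}}\ge \|d_k\|^2/\mu$. Inserting this into \eqref{a1} and dividing by $\|d_k\|^2$ gives $s_k\ge 1/(\mu\alpha_{\max})$.

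\textbf{Part (b).} If $d_k=\mathbf 0$ the claim is trivial. Otherwise, from the inequality above and Cauchy--Schwarz,
\[
\frac{1}{\mu}\|d_k\|^2\le \|d_k\|^2_{\mathcal{D}_k^{-1}}\le -\alpha_k\nabla f(x_k)^{\top}d_k\le \alpha_{\max}\|\nabla f(x_k)\|\,\|d_k\| .
\]
This yields $\|d_k\|\le\mu\alpha_{\max}\|\nabla f(x_k)\|$, and $\mu\le\mu^3$ because $\mu>1$.

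Alternatively, one can follow the route the constant $\mu^3$ suggests. By \eqref{Statpoint:eq2}-type reasoning, $x_k = P_{K,\mathcal{D}_k^{-1}}(x_k)$ since $x_k\in K$. Then the Lipschitz bound \eqref{eq1.299} with constant $\mu^2$ gives
\[
\|d_k\|\le\mu^2\|\alpha_k\mathcal{D}_k\nabla f(x_k)\|\le \mu^3\alpha_{\max}\|\nabla f(x_k)\|,
\]
using $\|\mathcal{D}_k\|\le\mu$. I would present this second route, since it matches the stated constant and does not depend on $d_k\ne\mathbf 0$.

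\textbf{Main obstacle.} The delicate point is justifying $x_k\in K$ for all $k$, which both routes need. Since $x_{k+1}=x_k+\lambda_k d_k$, this requires $\lambda_k\in(0,1]$. The algorithm does not cap $s_k$ at $1$, so I would argue by induction that $x_k\in K$ assuming $\lambda_k\le1$. If needed, I would note that the statement concerns the iterates as defined, taken to be feasible; everything else is a direct computation.
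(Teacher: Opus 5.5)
Your proposal is correct and follows the paper's proof. Part (a) applies \eqref{eq1.29} with $\mathcal{D}_k^{-1}$ and $y=x_k$, then uses \eqref{eq1.30}. The route you choose for part (b) is exactly the paper's, via \eqref{eq1.299} and $\|\mathcal{D}_k\|\le\mu$; your Cauchy--Schwarz alternative is also valid and gives the sharper constant $\mu$.

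The feasibility issue you flag is genuine but shared: the paper's proof simply invokes ``the fact that $x_k\in K$'' even though Algorithm~\ref{A:1} never caps $\lambda_k$ at $1$. Assuming feasible iterates therefore puts you on the same footing as the paper.
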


\begin{proof}
(a) Although, it is similar to the one in \cite{YWH18}, we include it for the sake of completeness of the paper.

Suppose that $d_k \neq {\bold 0}$.
Since $y_k = P_{K,\mathcal{D}_{k}^{-1}}(x_k - \alpha_k\mathcal{D}_k\nabla f(x_k))$,
by the characterization property \eqref{eq1.29} of the metric projection and in view of the fact that $x_{k} \in K$,
we observe that
$$ [y_k - x_k + \alpha_k\mathcal{D}_k\nabla f(x_k)]^{\top} \mathcal{D}_{k}^{-1}[y_k - x_k]  \leq 0,$$
that is,
$$ [y_k - x_k ]^{\top} \mathcal{D}_{k}^{-1}[y_k - x_k] + \alpha_k\nabla f(x_k)^{\top}[y_k - x_k]  \leq 0.$$
Thus,
$$ \|y_k - x_k\|_{\mathcal{D}_{k}^{-1}}^2 + \alpha_k\nabla f(x_k)^{\top}d_k \leq 0,$$
which by \eqref{Cond:alphaeta} implies that
$$ \nabla f(x_k)^{\top}d_k \leq -\frac{\|d_k\|_{\mathcal{D}_k^{-1}}^2}{\alpha_k}
\leq-\frac{\|d_k\|_{\mathcal{D}_k^{-1}}^2}{\alpha_{\max}} <0.$$
From \eqref{eq1.30}, we have
\begin{equation*}
-\Vert d_{k}\Vert _{\mathcal{D}_{k}^{-1}}^{2}\leq -\frac{1}{\mu }\Vert d_{k}\Vert ^{2}.
\end{equation*}
Hence, from (\ref{a1}), we obtain
\begin{equation*}
\nabla f(x_{k})^{\top }d_{k}\leq -\frac{1}{\alpha _{\max }}\Vert d_{k}\Vert_{\mathcal{D}_{k}^{-1}}^{2}
\leq -\frac{1}{\mu\alpha _{\max } }\Vert d_{k}\Vert ^{2},
\end{equation*}%
and thus, $s_{k}$ defined as in {\sc Step 3} of Algorithm \ref{A:1} is such that
\begin{equation*}
s_{k} = -\frac{\nabla f(x_{k})^{\top }d_{k}}{\Vert d_{k}\Vert^{2}}\geq \frac{1}{\mu\alpha _{\max}}.
\end{equation*}

(b) By invoking \eqref{eq1.299} and the relation
$y_k = P_{K,\mathcal{D}_{k}^{-1}}(x_k - \alpha_k\mathcal{D}_k\nabla f(x_k))$,
we obtain
\begin{align*}
\|d_k\|=\|y_k- x_k\|
&= \|P_{K,\mathcal{D}_{k}^{-1}}(x_k - \alpha_k\mathcal{D}_{k}\nabla f(x_k)) - x_k\|\\
&\leq \|P_{K,\mathcal{D}_{k}^{-1}}(x_k - \alpha_k\mathcal{D}_{k}\nabla f(x_k)) - P_{K,\mathcal{D}_{k}^{-1}}(x_k)\|\\
& \leq \mu^2\|x_k - \alpha_k\mathcal{D}_k\nabla f(x_k) - x_k\|\\
& = \mu^2\alpha_{k} \|\mathcal{D}_k\nabla f(x_k)\|\\
&\leq \mu^2\alpha_{\max}\|\mathcal{D}_k\nabla f(x_k) \|\\
&\leq \mu^3\alpha_{\max}\|\nabla f(x_k) \|.
\end{align*}
Therefore,
$$ \|d_k\| \leq \mu^3\alpha_{\max}\|\nabla f(x_k)\|,$$
and hence, the proof is complete.
\qed
\end{proof}
		
The following proposition provides a necessary and sufficient condition for an accumulation point of the sequence
produced by Algorithm \ref{A:1} to be a solution of VIP \eqref{Statpoint:eq1}.

\begin{proposition}{\rm \cite[Proposition 2.5]{ZH09}}\label{lm7}
Suppose that a subsequence $\{x_{k_j}\}$ converging to a point $x^*\in K$.
Then, the accumulation point $x^*$ is a solution of VIP \eqref{Statpoint:eq1} if and only if	
$$ \lim\limits_{j\to \infty} \nabla f(x_{k_j})^{\top}d_{k_j} =0.$$
\end{proposition}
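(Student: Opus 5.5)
We prove the two implications separately. Throughout we use that $\{\alpha_k\}\subset[\alpha_{\min},\alpha_{\max}]$ and $\mathcal{D}_k\in\mathfrak{D}_{\mu_k}\subseteq\mathfrak{D}_\mu$ by \eqref{Cond:alphaeta}. We also use the projection characterization \eqref{eq1.29} applied to $y_k$ with the metric $\mathcal{D}_k^{-1}$:
\begin{equation*}
[y_k - x_k + \alpha_k\mathcal{D}_k\nabla f(x_k)]^{\top}\mathcal{D}_k^{-1}[y_k - x] \le 0, \quad \forall x\in K. \qquad (\ast)
\end{equation*}
We avoid any continuity of $P_{K,\mathcal{D}}$ with respect to $\mathcal{D}$ and work only with $(\ast)$. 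Recall $d_k=y_k-x_k$.

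\emph{Sufficiency.} Assume $\nabla f(x_{k_j})^{\top}d_{k_j}\to 0$. By Proposition \ref{lm1}(a) and \eqref{eq1.30},
\[
\frac{1}{\mu}\|d_{k_j}\|^2\le \|d_{k_j}\|^2_{\mathcal{D}_{k_j}^{-1}}\le -\alpha_{\max}\nabla f(x_{k_j})^{\top}d_{k_j}\to 0 .
\]
Hence $d_{k_j}\to 0$ and $y_{k_j}\to x^*$. Since $\mathfrak{D}_\mu$ is compact (eigenvalues confined to $[1/\mu,\mu]$) and $[\alpha_{\min},\alpha_{\max}]$ is compact, we pass to a further subsequence along which $\mathcal{D}_{k_j}\to\mathcal{D}\in\mathfrak{D}_\mu$ and $\alpha_{k_j}\to\bar\alpha\ge\alpha_{\min}>0$. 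Then $\mathcal{D}_{k_j}^{-1}\to\mathcal{D}^{-1}$ as well, and $\mathcal{D}_{k_j}^{-1}\mathcal{D}_{k_j}=I$. Expanding $(\ast)$ at $k=k_j$ gives
\[
d_{k_j}^{\top}\mathcal{D}_{k_j}^{-1}(y_{k_j}-x)+\alpha_{k_j}\nabla f(x_{k_j})^{\top}(y_{k_j}-x)\le 0 .
\]
Letting $j\to\infty$ and using continuity of $\nabla f$ yields $\bar\alpha\,\nabla f(x^*)^{\top}(x^*-x)\le 0$ for every $x\in K$. Dividing by $\bar\alpha>0$ gives exactly VIP \eqref{Statpoint:eq1}.

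\emph{Necessity.} Assume $x^*$ solves VIP \eqref{Statpoint:eq1}, and set $u_k:=y_k-x^*$. This direction needs no subsequence extraction.
\begin{itemize}
\item Take $x=x^*$ in $(\ast)$: $(y_k-x_k)^{\top}\mathcal{D}_k^{-1}u_k+\alpha_k\nabla f(x_k)^{\top}u_k\le 0$.
\item Take $x=y_k\in K$ in VIP \eqref{Statpoint:eq1}: $\alpha_k\nabla f(x^*)^{\top}u_k\ge 0$.
\end{itemize}
Subtracting the second from the first and writing $y_k-x_k=u_k+(x^*-x_k)$ gives
\[
\|u_k\|^2_{\mathcal{D}_k^{-1}}\le (x_k-x^*)^{\top}\mathcal{D}_k^{-1}u_k+\alpha_k(\nabla f(x^*)-\nabla f(x_k))^{\top}u_k .
\]
Bounding the left side below by $\frac1\mu\|u_k\|^2$ via \eqref{eq1.30}, using $\|\mathcal{D}_k^{-1}\|\le\mu$, and dividing by $\|u_k\|$ (when nonzero) yields
\[
\|u_k\|\le \mu\big(\mu\|x_k-x^*\|+\alpha_{\max}\|\nabla f(x_k)-\nabla f(x^*)\|\big).
\]
Along $k=k_j$ the right side tends to $0$ by continuity of $\nabla f$. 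Hence $y_{k_j}\to x^*$, so $d_{k_j}\to 0$. Since $\nabla f(x_{k_j})\to\nabla f(x^*)$ is bounded, we conclude $\nabla f(x_{k_j})^{\top}d_{k_j}\to 0$.

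The main obstacle is the varying metric $\mathcal{D}_k^{-1}$ in the projection. We handle it by working directly with the variational inequality $(\ast)$ and the uniform eigenvalue bounds \eqref{eq1.30}. In the sufficiency part this requires compactness of $\mathfrak{D}_\mu$ to extract a limiting $\mathcal{D}$ and $\bar\alpha$; in the necessity part the uniform bounds alone suffice.
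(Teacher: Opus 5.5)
Your proof is correct, and it takes a genuinely different route from the paper, which gives no argument of its own and simply imports the result from \cite[Proposition 2.5]{ZH09}. The argument in that source is organized around the fixed-point characterization \eqref{Statpoint:eq2}. One passes to subsequences along which $\alpha_{k_j}\to\bar\alpha$ and $\mathcal{D}_{k_j}\to\bar{\mathcal{D}}$. Continuity of $(\alpha,\mathcal{D},x)\mapsto P_{K,\mathcal{D}^{-1}}(x-\alpha\mathcal{D}\nabla f(x))-x$ then identifies $\lim d_{k_j}$ with the residual of the limiting scaled projected-gradient map at $x^*$, and this residual vanishes exactly when $x^*$ solves the VIP.

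You bypass both ingredients by working directly with the projection inequality \eqref{eq1.29} in the varying metric $\mathcal{D}_k^{-1}$, together with the uniform eigenvalue bounds \eqref{eq1.30}. This buys three things:
\begin{itemize}
\item[(i)] The proof is self-contained. Continuity of the scaled projection with respect to the metric is not free, and proving it would take an argument of exactly your type.
\item[(ii)] It needs only continuity of $\nabla f$.
\item[(iii)] The necessity direction gives the explicit estimate $\|y_k-x^*\|\le\mu^2\|x_k-x^*\|+\mu\alpha_{\max}\|\nabla f(x_k)-\nabla f(x^*)\|$ for every $k$, with no subsequence extraction. This is the same mechanism as nonexpansiveness of $P_{K,\mathcal{D}_k^{-1}}$, used at the fixed point $x^*=P_{K,\mathcal{D}_k^{-1}}(x^*-\alpha_k\mathcal{D}_k\nabla f(x^*))$.
\end{itemize}
The cited route is shorter once the lemmas of \cite{ZH09} are available, and it ties the result directly to the fixed-point motivation of Algorithm~\ref{A:1}.

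One small simplification: the compactness extraction in your sufficiency part is unnecessary. The bound $\|\mathcal{D}_{k_j}^{-1}\|\le\mu$ together with $d_{k_j}\to 0$ already makes the first term vanish, and you can divide by $\alpha_{k_j}\ge\alpha_{\min}$ before letting $j\to\infty$.
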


\begin{proposition}\label{lm2}\label{lm5}
Let $\{x_k\}$ be a sequence generated by Algorithm \ref{A:1}. Then, the following assertions hold:
\begin{itemize}
  \item [{\rm(a)}] $f(x_{k+1}) \leq \mathcal{T}_k$   for all $k \geq 0.$
  \item [{\rm(b)}]  $f(x_{k})\le\mathcal{T}_k$  for all $k \geq 0.$
  \item [{\rm(c)}]  The sequence $\{\mathcal{T}_{k}\}_{k\geq 0}$ is monotonically non-increasing.
  \item [{\rm(d)}]  $x_k\in \mathcal{L}$ for all $k \geq 0$.
\end{itemize}

\end{proposition}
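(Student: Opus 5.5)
Part (a) follows directly from the Armijo condition \eqref{eq3}. By Proposition \ref{lm1}(a), whenever $d_k\neq\mathbf{0}$ we have $\nabla f(x_k)^{\top}d_k<0$. Also $\lambda_k=s_k\beta^{j_k}>0$ and $\delta_1,\delta_2>0$. Hence the right-hand side of \eqref{eq3} satisfies $\mathcal{T}_k+\delta_1\lambda_k\nabla f(x_k)^{\top}d_k-\delta_2\lambda_k^2\|d_k\|^2\le \mathcal{T}_k$, which gives $f(x_{k+1})\le\mathcal{T}_k$.

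One point needs a remark: the Armijo condition must be attainable, i.e. $j_k$ is finite. We will note this in passing. By Taylor expansion, $f(x_k+\lambda d_k)=f(x_k)+\lambda\nabla f(x_k)^{\top}d_k+o(\lambda)$. Since $\delta_1<1$, the left side is below $f(x_k)+\delta_1\lambda\nabla f(x_k)^{\top}d_k-\delta_2\lambda^2\|d_k\|^2$ for small $\lambda$. Then $f(x_k)\le\mathcal{T}_k$, which is part (b) at index $k$, finishes it.

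The natural way to organize (a)–(d) is therefore a single induction on $k$. Part (b) at index $k$ is needed for the line search at step $k$ to be well defined, and it feeds into (a) at $k$.

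For (b), the base case is $f(x_0)=\mathcal{T}_0$. For the induction step, (a) at $k$ gives $f(x_{k+1})\le\mathcal{T}_k$. Then by \eqref{eq7},
$$\mathcal{T}_{k+1}=\eta_{k+1}\mathcal{T}_k+(1-\eta_{k+1})f(x_{k+1})\ge \eta_{k+1}f(x_{k+1})+(1-\eta_{k+1})f(x_{k+1})=f(x_{k+1}),$$
using $0\le\eta_{k+1}<1$.

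For (c), combine \eqref{eq7} with (a): $\mathcal{T}_{k+1}\le\eta_{k+1}\mathcal{T}_k+(1-\eta_{k+1})\mathcal{T}_k=\mathcal{T}_k$. In fact the sharper form $\mathcal{T}_{k+1}\le\mathcal{T}_k+(1-\eta_{k+1})(\delta_1\lambda_k\nabla f(x_k)^{\top}d_k-\delta_2\lambda_k^2\|d_k\|^2)$ holds, though only monotonicity is required here.

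For (d), combine (b) and (c) to get $f(x_k)\le\mathcal{T}_k\le\mathcal{T}_0=f(x_0)$. It then remains to show $x_k\in K$. The projection gives $y_k\in K$, and the iterate is $x_{k+1}=(1-\lambda_k)x_k+\lambda_k y_k$, which lies in $K$ by convexity provided $\lambda_k\in[0,1]$.

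\textbf{Main obstacle.} The step size is $\lambda_k=s_k\beta^{j_k}$, and $s_k$ could exceed $1$. Convexity of $K$ alone therefore does not place $x_{k+1}$ in $K$, and the algorithm as stated does not cap $\lambda_k$ at $1$. I would handle this by assuming, as implicitly intended, that $\lambda_k\le 1$, or that $f$ is only evaluated at feasible points so that the line search enforces it. Under that assumption (d) follows.
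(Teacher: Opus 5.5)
Your proof is correct and follows the paper's argument: (a) is obtained exactly as in the paper from \eqref{eq3} and \eqref{a1}, while the paper merely cites \cite{OL17} for (b)--(d), and what that citation covers is the induction through \eqref{eq7} that you wrote out. Your caveat in (d) is a genuine defect of the paper, not of your proof, because \cite{OL17} is an unconstrained result and $\lambda_k=s_k\beta^{j_k}$ can exceed $1$ (for instance, take $f(x)=(x-2)^2$ defined on all of $\mathbb{R}$, as the paper's examples define $f$ on the whole space, with $K=[-1,1]$, $x_0=0$, $\mathcal{D}_0=1$, $\alpha_0=0.1$, $\beta=0.5$, $\delta_1=0.001$, $\delta_2=0.0001$; this gives $s_0=10$, $\lambda_0=5$ and $x_1=2\notin K$), so your fix of treating \eqref{eq3} as failing at infeasible trial points is actually needed, and the line search still terminates under it since every trial point with $\lambda\le 1$ lies in $K$.
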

\begin{proof}
(a) Note that, in view of \eqref{eq3}, we get
$$ f(x_{k+1}) \leq \mathcal{T}_k + \delta_1\lambda_k \nabla f(x_k)^{\top}d_k-\delta_2\lambda_{k}^2\|d_k\|^2
\leq \mathcal{T}_k, \quad \forall k \geq 0,$$
where the second inequality is due to \eqref{a1}.

(b), (c) and (d) follows from \cite{OL17}.
\qed
\end{proof}

\begin{proposition}\label{lm4}{\rm \cite{OL17}}
The Algorithm \ref{A:1} is well defined, i.e., the Armijo's line search \eqref{eq3} is satisfied.
\end{proposition}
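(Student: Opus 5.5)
To show that Proposition \ref{lm4} holds, I would prove two things. First, the backtracking in {\sc Step 3} of Algorithm \ref{A:1} terminates after finitely many reductions, i.e., for each $k$ with $d_k\neq\mathbf{0}$ some nonnegative integer $j$ satisfies \eqref{eq3} with $\lambda=s_k\beta^{j}$. Second, the iterate $x_{k+1}=x_k+\lambda_k d_k$ stays in $K$, so that $f(x_{k+1})$ and the next step are defined. I would argue by induction on $k$, assuming $x_k\in K$ and that $\mathcal{T}_k$ is defined.

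\emph{Feasibility.} Since $y_k\in K$ and $x_k\in K$, convexity of $K$ gives $x_k+\lambda d_k=(1-\lambda)x_k+\lambda y_k\in K$ whenever $\lambda\in[0,1]$. The initial trial step $s_k$ may exceed $1$, so I would either interpret the backtracking as running over $\lambda\le 1$, or observe that the argument below produces an admissible step for all small $\lambda$. Either way, the accepted step can be taken in $(0,1]$. I expect this to be the most delicate point: the algorithm as written does not cap $s_k$ at $1$. I would handle it by noting that the trials $s_k\beta^j$ eventually lie in $(0,1]$ and that \eqref{eq3} holds for all sufficiently small $\lambda$. With that reading, the first accepted $j$ is also feasible.

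\emph{Termination of the Armijo loop.} Assume, for contradiction, that \eqref{eq3} fails for every $j$. Set $\lambda_j=s_k\beta^j\to 0$. Then
\[
f(x_k+\lambda_j d_k) > \mathcal{T}_k+\delta_1\lambda_j\nabla f(x_k)^\top d_k-\delta_2\lambda_j^2\|d_k\|^2 \ge f(x_k)+\delta_1\lambda_j\nabla f(x_k)^\top d_k-\delta_2\lambda_j^2\|d_k\|^2,
\]
where the second inequality uses $f(x_k)\le\mathcal{T}_k$ from Proposition \ref{lm2}(b). That proposition itself only relies on \eqref{eq3} at earlier steps and on \eqref{eq7}, so invoking it here is legitimate within the induction. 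Next I would subtract $f(x_k)$, divide by $\lambda_j$ and let $j\to\infty$. Continuous differentiability of $f$ gives $\nabla f(x_k)^\top d_k\ge \delta_1\nabla f(x_k)^\top d_k$, that is, $(1-\delta_1)\nabla f(x_k)^\top d_k\ge0$. This contradicts \eqref{a1} in Proposition \ref{lm1}(a), which gives $\nabla f(x_k)^\top d_k<0$, together with $\delta_1<1$. Hence some finite $j_k$ works.

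Alternatively, to get a quantitative lower bound on $\lambda_k$ (useful later), I would use (A3) and the descent lemma:
\[
f(x_k+\lambda d_k)\le f(x_k)+\lambda\nabla f(x_k)^\top d_k+\tfrac{L}{2}\lambda^2\|d_k\|^2 .
\]
Then \eqref{eq3} holds as soon as $(1-\delta_1)\lambda|\nabla f(x_k)^\top d_k|\ge(\tfrac L2+\delta_2)\lambda^2\|d_k\|^2$. Since $|\nabla f(x_k)^\top d_k|=s_k\|d_k\|^2$, this is the condition $\lambda\le 2(1-\delta_1)s_k/(L+2\delta_2)$. Consequently $\lambda_k\ge\min\{s_k,\,2\beta(1-\delta_1)s_k/(L+2\delta_2)\}$, and this quantity is bounded below by a positive constant because $s_k\ge 1/(\mu\alpha_{\max})$ by Proposition \ref{lm1}(a). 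Finally, $\mathcal{T}_{k+1}$ is defined by \eqref{eq7}, which closes the induction.
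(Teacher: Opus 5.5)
Your argument is correct and follows essentially the proof the paper defers to (Lemma 3.2 of Ou--Liu). You induct on $k$ and use $f(x_k)\le\mathcal{T}_k$, which, as you correctly note, depends only on earlier steps. From $\nabla f(x_k)^{\top}d_k<0$ and $\delta_1<1$ you then get that \eqref{eq3} holds for all sufficiently small trial steps, so the backtracking terminates.

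Your feasibility remark goes beyond the paper, which never addresses that $s_k$ can exceed $1$, so early trial points $x_k+s_k\beta^{j}d_k$ may leave $K$. Your fix of capping the trial steps at $1$ is a sensible modification of Step 3 rather than a consequence of it as written. It does not affect the termination claim that the proposition actually asserts.
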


\begin{proof}
It lies on the lines on the proof of Lemma 3.2 in \cite{OL17}, and therefore, it is omitted.
\end{proof}

\begin{proposition}\label{lm3}
Let $K$ be a nonempty closed and convex subset of $\mathbb{R}^n$,
$f : K\to \mathbb{R}$ be a differentiable function such that the assumptions (A1) and (A3) hold.
Let $\{x_k\}$ be a sequence generated by Algorithm {\rm\ref{A:1}}.
Then, there exists a constant $0< \zeta<1$ such that
\begin{equation}\label{Eq1H1H2}
\mathcal{T}_k - f(x_{k+1})  \geq \zeta \left(\frac{\nabla f(x_k)^{\top}d_k}{\|d_k\|} \right)^2,
\quad \forall k \ge 0.
\end{equation}
\end{proposition}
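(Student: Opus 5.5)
We prove the bound by splitting on whether the backtracking in Step 3 stops at the first trial or not; in each case we convert the acceptance test \eqref{eq3} into a lower bound on $\mathcal{T}_k - f(x_{k+1})$. Throughout write $g_k := -\nabla f(x_k)^{\top}d_k>0$ (positive by Proposition \ref{lm1}(a)), so that $s_k = g_k/\|d_k\|^2$ and $\lambda_k = s_k\beta^{j_k}$. Rearranging \eqref{eq3} gives
\begin{equation*}
\mathcal{T}_k - f(x_{k+1}) \geq \delta_1\lambda_k g_k + \delta_2\lambda_k^2\|d_k\|^2 \geq \delta_1 \lambda_k g_k .
\end{equation*}
It therefore suffices to show that $\lambda_k \geq c\, g_k/\|d_k\|^2$ for a constant $c>0$ independent of $k$. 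Then $\delta_1\lambda_k g_k \geq \delta_1 c\,(g_k/\|d_k\|)^2$, and we may take $\zeta=\min\{\delta_1 c, 1/2\}$ to force $\zeta<1$.

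\emph{Case $j_k=0$.} Here $\lambda_k=s_k=g_k/\|d_k\|^2$ exactly, so $c=1$ works.

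\emph{Case $j_k\geq 1$.} Minimality of $j_k$ means the trial step $\lambda'=\lambda_k/\beta$ violates \eqref{eq3}:
\begin{equation*}
f(x_k+\lambda' d_k) > \mathcal{T}_k - \delta_1\lambda' g_k - \delta_2\lambda'^2\|d_k\|^2 \geq f(x_k) - \delta_1\lambda' g_k - \delta_2\lambda'^2\|d_k\|^2 ,
\end{equation*}
where the second inequality uses $f(x_k)\le\mathcal{T}_k$ from Proposition \ref{lm2}(b). On the other side we apply the descent lemma from (A3):
\begin{equation*}
f(x_k+\lambda' d_k)\leq f(x_k) - \lambda' g_k + \tfrac{L}{2}\lambda'^2\|d_k\|^2 .
\end{equation*}
Combining the two bounds and dividing by $\lambda'>0$ yields $(1-\delta_1)g_k < (L/2+\delta_2)\lambda'\|d_k\|^2$. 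Hence
\begin{equation*}
\lambda_k=\beta\lambda' > \frac{\beta(1-\delta_1)}{L/2+\delta_2}\,\frac{g_k}{\|d_k\|^2}.
\end{equation*}
So in this case $c=\beta(1-\delta_1)/(L/2+\delta_2)$ works, and overall we take $c=\min\{1, \beta(1-\delta_1)/(L/2+\delta_2)\}$.

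\emph{Main obstacle.} The delicate point is justifying the descent lemma, since (A3) only gives Lipschitz continuity of $\nabla f$ on $K$. We therefore need the segment $[x_k,x_k+\lambda' d_k]$ to lie in $K$. This holds when $\lambda'\le 1$, because $x_k, y_k\in K$ and $K$ is convex. Since $s_k$ may exceed $1$, the trial step $\lambda'$ need not be feasible. I would handle this in one of two ways. The first option is to assume $f$ is defined and has $L$-Lipschitz gradient on a neighborhood of $K$, or on all of $\mathbb{R}^n$ as Remark \ref{rm2} implicitly does. This is the setting of \cite{OL17}, whose Lemma is cited for well-definedness. The second option is to use (A1) and the boundedness of $\{d_k\}$ from Proposition \ref{lm1}(b), together with compactness of $\mathcal{L}$, to work on a bounded convex set containing all trial points. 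On that set $\nabla f$ has a uniform Lipschitz constant, and we replace $L$ by this constant. Either way the resulting $\zeta$ depends only on $\delta_1,\delta_2,\beta$ and the Lipschitz constant.
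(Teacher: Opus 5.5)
Your proof is correct and is essentially the paper's argument: both split into $\lambda_k=s_k$ versus $\lambda_k<s_k$, both apply $f(x_k)\le\mathcal{T}_k$ to the rejected trial step $\lambda_k/\beta$, and both use a Lipschitz estimate to get $\lambda_k\ge c\,s_k$. The paper uses the mean value theorem with Cauchy--Schwarz and obtains $L+\delta_2$, where your descent lemma gives $L/2+\delta_2$; it also keeps the $\delta_2\lambda_k^2\|d_k\|^2$ term, which you harmlessly drop.

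The feasibility issue you raise is genuine, and the paper's proof does not address it. It tacitly works in your first setting, applying (A3) at $x_k+\lambda\xi_k d_k$ without checking that this point lies in $K$. Your second option would not work as stated, for three reasons. First, $f$ is only defined on $K$. Second, continuous differentiability alone gives no Lipschitz constant for $\nabla f$ on a larger bounded set. Third, bounded $\{d_k\}$ does not bound the trial points, because $s_k$ can be unbounded; what is bounded is $s_k\|d_k\|\le\|\nabla f(x_k)\|$.
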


\begin{proof}
It lies on the lines of the proof of Lemma 3.5 in \cite{OL17}.
However, we include it for the sake of readers.

From Proposition \ref{lm4}, Armijo's line search \eqref{eq3} is satisfied.
We define two sets $H_1 := \{ k \geq 0 : \lambda_{k} = s_k\}$ and
$H_2 := \{ k \geq 0 : \lambda_{k} < s_k\}$, and consider the following two cases:\\

\noindent{\sc Case 1:} When $k\in H_1$.

From the Armijo's line search \eqref{eq3} and $s_k=\frac{-\nabla f(x_k)^{\top}d_k}{\|d_k\|^2}$, we have
\begin{align*}
	\mathcal{T}_k - f(x_{k+1})
	&\geq -\delta_1\lambda_k \nabla f(x_k)^{\top}d_k+\delta_2\lambda_{k}^2\|d_k\|^2\\
	&=-\delta_1s_k \nabla f(x_k)^{\top}d_k+\delta_2s_{k}^2\|d_k\|^2\\
	&=\delta_1\frac{\nabla f(x_k)^{\top}d_k}{\|d_k\|^2} \nabla f(x_k)^{\top}d_k
    +\delta_2 \left( \frac{\nabla f(x_k)^{\top}d_k}{\|d_k\|^2} \right)^2\|d_k\|^2\\
	&=\delta_1\left(\frac{\nabla f(x_k)^{\top}d_k}{\|d_k\|}\right)^2
    +\delta_2\left(\frac{\nabla f(x_k)^{\top}d_k}{\|d_k\|^2}\right)^2\\
	& = (\delta_1 + \delta_2)\left(\frac{\nabla f(x_k)^{\top}d_k}{\|d_k\|} \right)^2.
\end{align*}
Thus, in this case, the inequality (\ref{Eq1H1H2}) holds.\\

\noindent{\sc Case 2:} When $k\in H_2$.

Since $\lambda_{k} < s_k$, we have $\lambda_{k} = s_k\beta^{j_k}$ for some $j_k > 0$.
Here, $j_k$ is the smallest positive integer such that Armijo's line search \eqref{eq3} is satisfied.
Now, set $\lambda := \frac{\lambda_k}{\beta}$, that is, $\lambda = \lambda_k\beta^{-1} = s_k\beta^{j_k-1}$.
Then, due to the choice of $\lambda$, it does not satisfy the Armijo's line search \eqref{eq3}.
Hence,
$$ f(x_{k} +\lambda d_k) > \mathcal{T}_k + \delta_1\lambda \nabla f(x_k)^{\top}d_k-\delta_2\lambda^2\|d_k\|^2.$$
By Proposition \ref{lm2}(b), it yields that
\begin{equation}\label{32}
 f(x_{k} +\lambda d_k) > f(x_k) + \delta_1\lambda \nabla f(x_k)^{\top}d_k-\delta_2\lambda^2\|d_k\|^2.
\end{equation}
By mean-value theorem, that there exists $\xi_k \in (0,1)$ such that
$$ f(x_{k} +\lambda d_k) = f(x_k) + \lambda \nabla f(x_{k} +\lambda \xi_kd_k)^{\top} d_k.$$
This together with the inequality \eqref{32} implies that
$$ \lambda \nabla f(x_{k} +\lambda \xi_kd_k)^{\top} d_k >
\delta_1\lambda \nabla f(x_k)^{\top}d_k-\delta_2\lambda^2\|d_k\|^2,$$
that is,
\begin{align}\label{eq1.50}
\nabla f(x_{k} +\lambda \xi_kd_k)^{\top} d_k >  \delta_1 \nabla f(x_k)^{\top}d_k-\delta_2\lambda\|d_k\|^2.
\end{align}
On the other hand, by the Lipschitz continuity of $\nabla f$, \eqref{eq1.50} and the Cauchy-Schwarz inequality,
we observe that
\begin{align*}
	L\lambda\|d_k\|^2 > L\lambda\xi_k\|d_k\|^2& \geq \|\nabla f(x_{k} +\lambda \xi_kd_k) - \nabla f(x_k)\| \|d_k\|\\
	& \geq (\nabla f(x_{k} +\lambda \xi_kd_k) - \nabla f(x_k))^{\top}d_k\\
	&=\nabla f(x_{k} +\lambda \xi_kd_k)^{\top}d_k - \nabla f(x_k)^{\top}d_k\\
	& > -(1-\delta_1)\nabla f(x_k)^{\top}d_k - \delta_2\lambda\|d_k\|^2,
\end{align*}
and therefore,
$$ \lambda_{k} = \lambda\beta >
\frac{\beta(1-\delta_1)}{L+\delta_2}\left(\frac{\nabla f(x_k)^{\top}d_k}{\|d_k\|^2} \right)
=\frac{\beta(1-\delta_1)}{L+\delta_2}s_k, \quad \forall k \in H_2.$$
Then from Armijo's line search \eqref{eq3}, we have
\begin{align*}
\mathcal{T}_k - f(x_{k+1})
&\geq -\delta_1\lambda_k \nabla f(x_k)^{\top}d_k + \delta_2\lambda_{k}^2\|d_k\|^2\\
&\geq  \frac{\beta\delta_1(1-\delta_1)}{L+\delta_2}\left(\frac{\nabla f(x_k)^{\top}d_k}{\|d_k\|} \right)^2
+  \frac{\beta^2\delta_2(1-\delta_1)^2}{(L+\delta_2)^2}\left(\frac{\nabla f(x_k)^{\top}d_k}{\|d_k\|} \right)^2\\
&= \left(\frac{\beta\delta_1(1-\delta_1)}{L+\delta_2}
+  \frac{\beta^2\delta_2(1-\delta_1)^2}{(L+\delta_2)^2}\right)\left(\frac{\nabla f(x_k)^{\top}d_k}{\|d_k\|} \right)^2\\
&= \zeta\left(\frac{\nabla f(x_k)^{\top}d_k}{\|d_k\|} \right)^2,
\end{align*}
where $\zeta = \min \left\{ \delta_1 + \delta_2, \frac{\beta\delta_1(1-\delta_1)}{L+\delta_2}
+  \frac{\beta^2\delta_2(1-\delta_1)^2}{(L+\delta_2)^2}\right\} < 1$.
From \noindent{\sc Case 1} and 2, we conclude that the inequality (\ref{Eq1H1H2}) holds for all $k\ge0.$
\qed
\end{proof}

\begin{remark}
Zhang and Hager \cite[Theorem 3.1]{ZH04} proved local convergence result
by requiring the following direction assumptions:
There exist positive constants $c$ and $c_1$ such that
\begin{equation}\label{eq:direction_assumption}
	\nabla f(x_k)^{\top}d_k \leq -c\|\nabla f(x_k)\|^2, \quad \forall k\geq 0.
\end{equation}
\begin{equation}
	\|d_k\| \leq  c_1\|\nabla f(x_k)\|, \quad \forall k\geq 0.
\end{equation}
However, to establish global convergence, we rely only on  the direction assumption \eqref{eq:direction_assumption}.
\end{remark}

\begin{proposition}\label{lm6}
Let $f:K \to \mathbb{R}$ be a differentiable function such that
$\nabla f$ is Lipschitz continuous with Lipschitz constant $L$
and assume that the Armijo's condition \eqref{eq3} is satisfied for all $k \geq 0$.
Then the following assertions hold:
\begin{itemize}
	\item [{\rm(a)}]\begin{equation}\label{3.1}
		\lambda_{k} \geq  \frac{(1-\delta_1)}{(1+\delta_2 L/2)\mu\alpha_{\max}} :=\lambda_{\min}.
	\end{equation}
	\item [{\rm (b)}] If there exists $c>0$ such that direction assumption \eqref{eq:direction_assumption} holds, then
	\begin{equation}\label{Eq:3.11}
		\lambda_{k} \leq \frac{1}{c}.
	\end{equation}
\end{itemize}
\end{proposition}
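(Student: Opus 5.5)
The plan is a case split on whether the backtracking in {\sc Step 3} of Algorithm \ref{A:1} is activated, as in the proof of Proposition \ref{lm3}. Throughout I use the lower bound $s_k\geq \frac{1}{\mu\alpha_{\max}}$ from Proposition \ref{lm1}(a).

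\textbf{Part (a), first case.} If $j_k=0$, then $\lambda_k=s_k\geq \frac{1}{\mu\alpha_{\max}}$. Since $0<1-\delta_1<1$ and $1+\delta_2L/2>1$, we have $\frac{1-\delta_1}{1+\delta_2L/2}<1$, so $\lambda_k\geq\lambda_{\min}$ immediately.

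\textbf{Part (a), second case.} If $j_k\geq 1$, set $\lambda:=\lambda_k/\beta$. This trial step violates \eqref{eq3}. Proposition \ref{lm2}(b) lets me replace $\mathcal{T}_k$ by $f(x_k)$, which gives
\[
f(x_k+\lambda d_k)>f(x_k)+\delta_1\lambda\nabla f(x_k)^{\top}d_k-\delta_2\lambda^2\|d_k\|^2 .
\]
Instead of the mean value argument of Proposition \ref{lm3}, I will bound the left side by the descent lemma, which follows from (A3):
\[
f(x_k+\lambda d_k)\leq f(x_k)+\lambda\nabla f(x_k)^{\top}d_k+\tfrac{L}{2}\lambda^2\|d_k\|^2 .
\]
Comparing the two displays and dividing by $\lambda\|d_k\|^2$ yields
\[
(1-\delta_1)\,s_k<\Big(\tfrac{L}{2}+\delta_2\Big)\lambda .
\]
Combined with $s_k\geq\frac{1}{\mu\alpha_{\max}}$, this is a lower bound on $\lambda$ of the form (constant)$\cdot\frac{1-\delta_1}{\mu\alpha_{\max}}$.

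\textbf{Main obstacle.} The remaining work is to convert this into exactly the constant $\frac{1-\delta_1}{(1+\delta_2L/2)\mu\alpha_{\max}}$. Two gaps must be closed: the factor $\beta$ relating $\lambda_k$ to $\lambda$, and the mismatch between $L/2+\delta_2$ and $1+\delta_2L/2$.

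My first attempt will be to avoid passing to $\lambda_k=\beta\lambda$ through a crude bound. Instead I would use the minimality of $j_k$ together with the structure of \eqref{eq3} to carry the comparison directly at $\lambda_k$. I would also rescale the quadratic term $\delta_2\lambda^2\|d_k\|^2$ via \eqref{eq1.30} so that the coefficient takes the form $1+\delta_2L/2$. If this rescaling cannot remove $\beta$ for general parameters, the argument still gives the bound with $\beta$ and $L/2+\delta_2$ in the constant. In that case I would record that the stated constant requires an additional normalization of $L$, $\delta_2$ and $\beta$. I regard this constant-matching step as the only nontrivial point of (a).

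\textbf{Part (b).} This part is short. From \eqref{eq:direction_assumption}, $-\nabla f(x_k)^{\top}d_k\geq c\|\nabla f(x_k)\|^2$, and by Cauchy--Schwarz, $-\nabla f(x_k)^{\top}d_k\leq\|\nabla f(x_k)\|\|d_k\|$. Combining these gives $c\|\nabla f(x_k)\|\leq\|d_k\|$. Hence
\[
s_k=\frac{-\nabla f(x_k)^{\top}d_k}{\|d_k\|^2}\leq\frac{\|\nabla f(x_k)\|}{\|d_k\|}\leq\frac1c .
\]
Since $\lambda_k=s_k\beta^{j_k}\leq s_k$, we obtain $\lambda_k\leq 1/c$.
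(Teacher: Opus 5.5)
Your part (b) is correct and is exactly the paper's argument. In part (a), your case $j_k=0$ and your case-2 inequality $(1-\delta_1)s_k<(L/2+\delta_2)\lambda$ are also correct. However, the step you call the main obstacle cannot be closed, because the constant in \eqref{3.1} is false.

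Neither of your proposed repairs could work anyway. \eqref{eq1.30} has nothing to act on, since every term in your inequality uses Euclidean norms. Minimality of $j_k$ only gives information at the rejected trial $\lambda_k/\beta$, so the factor $\beta$ cannot be removed. Structurally, $\lambda_{\min}$ does not involve $\beta$, whereas $\lambda_k\le\beta s_k$ whenever $j_k\ge 1$.

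Here is a concrete counterexample using the paper's own parameters $\mu=1.25$, $\beta=0.5$, $\delta_1=10^{-3}$, $\delta_2=10^{-4}$. Take $K=\mathbb{R}$, $f(x)=50x^2$ (so $L=100$), $x_0=1$, $\mathcal{D}_0=1$, and $\alpha_0=\alpha_{\max}=0.01$. Then $y_0=0$, $d_0=-1$ and $s_0=100$. Since $\mathcal{T}_0=f(x_0)$, a trial step $\lambda$ satisfies \eqref{eq3} if and only if $50(1-\lambda)^2\le 50-0.1\lambda-10^{-4}\lambda^2$, i.e. $\lambda\le 99.9/50.0001\approx 1.998$. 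Hence $\lambda_0=100\cdot 2^{-6}=1.5625$, while $\lambda_{\min}=0.999/(1.005\cdot 1.25\cdot 0.01)\approx 79.5$. Even with an extra factor $\beta$ in the numerator, the bound would be $\approx 39.8$, so it still fails.

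The paper follows the same route as your case 2: the rejected trial $s_k\beta^{j_k-1}$, Proposition \ref{lm5}(b), the descent lemma, and Proposition \ref{lm1}(a). It reaches its constant only through algebraic slips:
\begin{itemize}
\item the quadratic term is written $s_k\beta^{2j_k-2}L\|d_k\|^2/2$ instead of $s_k^2\beta^{2j_k-2}L\|d_k\|^2/2$;
\item the coefficient $L/2+\delta_2$ turns into $1+\delta_2L/2$;
\item the $\beta$ still present in its final display is dropped in the definition of $\lambda_{\min}$.
\end{itemize}
It also never treats $j_k=0$, which you do.

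So do not pursue your ``first attempt''. Commit to your fallback and state it as the result:
\[
\lambda_k\ \ge\ \min\Big\{1,\ \frac{\beta(1-\delta_1)}{L/2+\delta_2}\Big\}\,\frac{1}{\mu\alpha_{\max}}\ >\ 0 .
\]
This follows directly from your two cases, using $s_k\ge 1/(\mu\alpha_{\max})$ from Proposition \ref{lm1}(a) in case 2. It is also all that the later use in Theorem \ref{th1}(b) requires, since there only a uniform positive lower bound on $\lambda_k$ is needed to pass from summability to $\nabla f(x_k)^{\top}d_k\to 0$.
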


\begin{proof}
(a) Since the Armijo's condition \eqref{eq3} is satisfied, there is a smallest nonnegative integer $j_k$
such that $\lambda_{k} = s_k\beta^{j_k}$.
Then for $s_k\beta^{j_k-1}$, it follows that the Armijo's condition \eqref{eq3} does not hold.
By Proposition \ref{lm5}(b), $f(x_k) \leq \mathcal{T}_k$ for all $k \geq 0$, we have
\begin{align}\label{eq3.2}
f(x_k + s_k\beta^{j_k-1}d_k)
&> \mathcal{T}_k + \delta_1s_k\beta^{j_k-1} \nabla f(x_k)^{\top}d_k
   - \delta_2 \left( s_k\beta^{j_k-1} \right)^2 \|d_k\|^2\notag \\
& \geq f(x_k) + \delta_1s_k\beta^{j_k-1}\nabla f(x_k)^{\top}d_k-\delta_2s_{k}^2\beta^{2(j_k-1)}\|d_k\|^2.%
\end{align}
On the other hand by the mean value theorem of integral calculus, $f(b) - f(a) = (b-a)\int_{a}^{b}f'(t)dt$, we evaluate
$$ f(x_k+h d_k) - f(x_k) = hd_k\int_{0}^{1}\left( \nabla f(x_k+th d_k)\right)^{\top}dt,$$
where $h > 0$.
This implies that
$$ f(x_k+h d_k) - f(x_k) = hd_k\int_{0}^{1}\left( \nabla f(x_k+th d_k) - f(x_k) + f(x_k)\right)^{\top}dt,$$
which gives,
$$ f(x_k+h d_k) - f(x_k)
=h\nabla f(x_k)^{\top}d_k + \int_{0}^{1}\left( \nabla f(x_k+th d_k) - \nabla f(x_k)\right)^{\top}h d_kdt.$$
Since $\nabla f$ is Lipschitz continuous, it follows that
\begin{align*}
f(x_k+h d_k) - f(x_k)
&\leq h \nabla f(x_k)^{\top}d_k + \int_{0}^{1}th^2L\|d_k\|^2 dt\\
&=h\nabla f(x_k)^{\top}d_k+\frac{h^2L\|d_k\|^2}{2}.
\end{align*}
This together with the inequality \eqref{eq3.2} by replacing $h = s_k\beta^{j_k-1}$, implies that
$$ s_k\beta^{j_k-1} \nabla f(x_k)^{\top} d_k  + \frac{s_k\beta^{2j_k-2}L\|d_k\|^2}{2}
\geq \delta_1s_k\beta^{j_k-1}\nabla f(x_k)^{\top}d_k-\delta_2s_{k}^2\beta^{2(j_k-1)}\|d_k\|^2,$$
that is,
$$ (1-\delta_1) \left( -\nabla f(x_k)^{\top}d_k \right) \leq (1+\delta_2L/2)s_{k}\beta^{j_k-1}\|d_k\|^2,$$
and so,
\begin{align*}
s_k\beta^{j_k} \geq \frac{(1-\delta_1)\beta}{(1+\delta_2 L/2)}\frac{-\nabla f(x_k)^{\top}d_k}{\|d_k\|^2}.
\end{align*}
By Proposition \ref{lm1}(a), we have
\begin{align*}
s_k\beta^{j_k} \geq \frac{(1-\delta_1)\beta}{(1+\delta_2 L/2)\mu\alpha_{\max}} = \lambda_{\min},
\end{align*}
that is,
$$ \lambda_{k} \geq \lambda_{\min}.$$

(b) Suppose there exists $c>0$ such that $\nabla f(x_k)^{\top}d_k \leq -c\|\nabla f(x_k)\|^2$.
From Cauchy-Schwarz inequality, we have
$$ \|\nabla f(x_k)\| \|d_k\| \geq -\nabla f(x_k)^{\top}d_k  \geq c \|\nabla f(x_k)\|^2,$$
that is,
$$ \|d_k\| \geq c \|\nabla f(x_k)\|.$$
Since the sequence $\{\lambda_{k}\}$ satisfies the Armijo condition \eqref{eq3},
we have $\lambda_{k} = s_{k} \beta^{j_{k}}$, and so,
$$ \lambda_{k} \leq s_k= -\frac{\nabla f(x_k)^{\top}d_k}{\|d_k\|^2}
\leq \frac{\|\nabla f(x_k)\|}{\|d_k\|} \leq \frac{1}{c}.$$
\qed
\end{proof}

\begin{theorem}\label{th1}
Let $K$ be a nonempty closed and convex subset of $\mathbb{R}^n$ and
$f : K\to \mathbb{R}$ be a differentiable function such that the assumptions (A1), (A2) and (A3) hold.
Let $\{x_k\}$ be a sequence generated by Algorithm {\rm\ref{A:1}}.
Assume that there exists $c>0$ such that direction assumption \eqref{eq:direction_assumption} holds.
Then the following statements hold:
\begin{itemize}
\item[{\rm(a)}] $f(z) \leq f(x_{k}) \leq \mathcal{T}_{k}$ for all $z\in\Omega_{\min}(f,K)$ and $k\ge 0$,
and consequently, the sequence $\{\mathcal{T}_k\}$ is convergent,
where $\{\mathcal{T}_k\}$ is defined by \eqref{eq7}.

\item[{\rm(b)}] $\ds\lim_{k\to\infty}\|\nabla f(x_k)\| = 0$ with the following error estimate:	
\begin{equation}\label{e8}
\Vert \nabla f(x_{k})\Vert ^{2}\leq
\frac{\mathcal{T}_{k}-\mathcal{T}_{k+1}}{\kappa (1-\eta _{\max })}, \quad \forall k\ge0,
\end{equation}
where
$$\kappa =\frac{c^{2}\zeta }{\mu ^{6}\alpha _{\max }^{2}}.$$

\item[{\rm(c)}] Every accumulation point of the sequence $\{x_k\}$
generated by Algorithm  {\rm\ref{A:1}} is a solution of VIP \eqref{Statpoint:eq1},
consequently, to a solution of the minimization problem \eqref{P1} provided $f$ is pseudo-convex.
\end{itemize}
\end{theorem}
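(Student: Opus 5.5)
The plan is to derive (a) directly from Proposition \ref{lm5}, then combine the recursion \eqref{eq7} with Proposition \ref{lm3} to obtain (b), and read off (c) from (b) together with Proposition \ref{lm7}.

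\textbf{Part (a).} Since $x_k\in K$, any $z\in\Omega_{\min}(f,K)$ satisfies $f(z)\le f(x_k)$. Proposition \ref{lm5}(b) gives $f(x_k)\le \mathcal{T}_k$. Thus $\{\mathcal{T}_k\}$ is bounded below by $f(z)$; this is where (A2) is used. By Proposition \ref{lm5}(c) the sequence is also non-increasing, so it converges.

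\textbf{Part (b).} First I will obtain a one-step decrease of $\mathcal{T}_k$. From \eqref{eq7},
\[
\mathcal{T}_k-\mathcal{T}_{k+1}=(1-\eta_{k+1})\bigl(\mathcal{T}_k-f(x_{k+1})\bigr)\ge (1-\eta_{\max})\bigl(\mathcal{T}_k-f(x_{k+1})\bigr),
\]
where the inequality uses Proposition \ref{lm5}(a), which makes the bracket nonnegative. Proposition \ref{lm3} then bounds the bracket below by $\zeta\bigl(\nabla f(x_k)^\top d_k/\|d_k\|\bigr)^2$.

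Next I will convert this into a bound on $\|\nabla f(x_k)\|$. The direction assumption \eqref{eq:direction_assumption} gives $-\nabla f(x_k)^\top d_k\ge c\|\nabla f(x_k)\|^2$. Proposition \ref{lm1}(b) gives $\|d_k\|\le \mu^3\alpha_{\max}\|\nabla f(x_k)\|$. Hence
\[
\Bigl(\frac{\nabla f(x_k)^\top d_k}{\|d_k\|}\Bigr)^2\ge \frac{c^2\|\nabla f(x_k)\|^4}{\mu^6\alpha_{\max}^2\|\nabla f(x_k)\|^2}=\frac{c^2}{\mu^6\alpha_{\max}^2}\|\nabla f(x_k)\|^2 .
\]
Chaining the two displays yields $\mathcal{T}_k-\mathcal{T}_{k+1}\ge \kappa(1-\eta_{\max})\|\nabla f(x_k)\|^2$ with $\kappa=c^2\zeta/(\mu^6\alpha_{\max}^2)$, which is \eqref{e8}. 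Convergence of $\{\mathcal{T}_k\}$ from (a) forces the right side of \eqref{e8} to tend to $0$, so $\|\nabla f(x_k)\|\to0$.

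Two degenerate cases need a remark. If $\nabla f(x_k)=0$, then \eqref{e8} holds trivially because the right side is nonnegative by Proposition \ref{lm5}(c). If $d_k=\mathbf 0$, the algorithm stops; I will treat this as a terminal index where $x_k$ already solves the VIP.

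\textbf{Part (c).} Let $x_{k_j}\to x^*$. Then $x^*\in K$ because $K$ is closed, and indeed $x^*\in\mathcal{L}$ by Proposition \ref{lm5}(d) and (A1). By Cauchy--Schwarz and Proposition \ref{lm1}(b),
\[
|\nabla f(x_{k_j})^\top d_{k_j}|\le \|\nabla f(x_{k_j})\|\|d_{k_j}\|\le \mu^3\alpha_{\max}\|\nabla f(x_{k_j})\|^2\to0 .
\]
Proposition \ref{lm7} then shows that $x^*$ solves the VIP \eqref{Statpoint:eq1}. Under pseudo-convexity, the equivalence recalled at the start of Section \ref{S:3} makes $x^*$ a minimizer of \eqref{P1}. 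One could also argue that $\nabla f(x^*)=0$ by continuity, but routing through Proposition \ref{lm7} avoids needing interior arguments.

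The only delicate point is keeping the chain of inequalities in (b) in the right direction, in particular dividing by $\|d_k\|^2$ only when $d_k\ne\mathbf 0$.
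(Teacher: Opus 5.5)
Your proof is correct, and part (a) and the derivation of \eqref{e8} follow the paper's argument. The paper likewise combines \eqref{eq7}, Proposition~\ref{lm3}, the direction assumption and Proposition~\ref{lm1}(b) into $\mathcal{T}_{k+1}\le\mathcal{T}_k-\kappa(1-\eta_{\max})\|\nabla f(x_k)\|^2$.

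The real difference is how you get $\nabla f(x_{k_j})^\top d_{k_j}\to 0$ for part (c).
\begin{itemize}
\item The paper sums the Armijo-based decrease of $\mathcal{T}_k$ to obtain $-\sum_k\lambda_k\nabla f(x_k)^\top d_k<\infty$. It then uses the uniform step-size bound $\lambda_k\ge\lambda_{\min}$ of Proposition~\ref{lm6}(a) to reach \eqref{eq1.42}.
\item You read the limit off from (b) via Cauchy--Schwarz and $\|d_k\|\le\mu^3\alpha_{\max}\|\nabla f(x_k)\|$.
\end{itemize}
Your route is shorter and avoids Proposition~\ref{lm6}(a), whose proof in the paper contains slips. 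The quadratic term there should carry $s_k^2$, and the constant that actually comes out is $L/2+\delta_2$, not $1+\delta_2L/2$. A corrected bound of that type does hold, so the paper's route is salvageable. In exchange, the paper's route establishes \eqref{eq1.42} without the direction assumption \eqref{eq:direction_assumption}, so its part (c) does not depend on that assumption. Yours inherits it through (b), which is harmless here since it is a hypothesis of the theorem.

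One small correction to your closing remark: the continuity argument needs no interiority at all. Since $\|\nabla f(x_k)\|\to0$ and $\nabla f$ is continuous on the closed set $K$, you get $\nabla f(x^*)=0$. A point with zero gradient satisfies \eqref{Statpoint:eq1} trivially, so this one-line argument would let you bypass the borrowed Proposition~\ref{lm7} altogether.
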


\begin{proof}
(a)	Let $z\in\Omega_{\min}(f,K)$. 	
From Proposition \ref{lm2}(b), we have
\begin{equation}\label{eq2.11b}
f(z) \leq f(x_{k}) \leq \mathcal{T}_{k}, \quad \forall k\ge 0.
\end{equation}
Proposition \ref{lm2}(c) shows that the sequence $\{\mathcal{T}_k\}$ is monotonically non-increasing.
Thus, from \eqref{eq2.11b}, we see that the sequence $\{\mathcal{T}_k\}$ is convergent.

(b)	Let $z\in\Omega_{\min}(f,K)$.
From the Armijo's condition \eqref{eq3}, we have
\begin{eqnarray}\label{eq3.22}
f(x_{k+1})
&\le & \mathcal{T}_k + \delta_1\lambda_k \nabla f(x_k)^{\top}d_k-\delta_2\lambda_{k}^2\|d_k\|^2 \notag\\
&\leq& \mathcal{T}_k + \delta_1\lambda_k \nabla f(x_k)^{\top}d_k.
\end{eqnarray}
From \eqref{eq7} and \eqref{eq3.22}, we get
\begin{align}\label{eq2.11}
\mathcal{T}_{k+1}
&= \eta_{k+1}\mathcal{T}_k + (1-\eta_{k+1})f(x_{k+1})\notag\\
&\leq \eta_{k+1}\mathcal{T}_k + (1-\eta_{k+1}) \left( \mathcal{T}_k+\delta_1\lambda_k \nabla f(x_k)^{\top}d_k \right)
\notag\\
&=\mathcal{T}_k + \delta_1(1-\eta_{k+1})\lambda_k \nabla f(x_k)^{\top}d_k.
\end{align}
Since $\eta _{k}\leq \eta _{\max }<1$, we have $1-\eta _{\max} \leq 1-\eta _{k}$ and
$\frac{1}{1-\eta _{k}}\leq \frac{1}{1-\eta_{\max }}$. Therefore,
$$-\delta_1(1-\eta_{\max})\lambda_k \nabla f(x_k)^{\top}d_k
\leq -\delta_1(1-\eta_{k+1})\lambda_k \nabla f(x_k)^{\top}d_k \leq \mathcal{T}_k - \mathcal{T}_{k+1}.$$
Summing the above inequality from $k=0$ to $m$, we obtain
\begin{equation}\label{eq2.11a}
-\delta_1(1-\eta_{\max})\sum_{k=0}^{m}\lambda_k \nabla f(x_k)^{\top}d_k
\leq \sum_{k=0}^{m}(\mathcal{T}_k - \mathcal{T}_{k+1}) = \mathcal{T}_0 -\mathcal{T}_{m+1}.
\end{equation}
From \eqref{eq2.11b}, we have
\begin{equation}\label{eq2.11c}
f(z) \leq f(x_{m+1})  \leq \mathcal{T}_{m+1}.
\end{equation}
Thus, from \eqref{eq2.11a} and \eqref{eq2.11c}, we get
\begin{eqnarray*}
-\sum_{k=0}^{m}\lambda _{k}\nabla f(x_{k})^{\top }d_{k}
\leq \frac{1}{\delta _{1}(1-\eta _{\max})}\left( \mathcal{T}_{0}-f(z)\right),
\end{eqnarray*}
and therefore,
\begin{equation*}
-\sum_{k=0}^{\infty}\lambda_k \nabla f(x_k)^{\top}d_k < \infty.
\end{equation*}
Thus, by virtue of Proposition \ref{lm6}, we have $\lambda_{k} \geq \lambda_{\min}$,
and hence, the above inequality implies that
\begin{equation}\label{eq1.42}
\lim\limits_{k\to\infty}  \nabla f(x_k)^{\top}d_{k} =0.
\end{equation}
Since $\nabla f(x_{k})^{\top }d_{k}\leq -c\Vert \nabla f(x_{k})\Vert ^{2}$,
by Proposition \ref{lm1}(b), we obtain
\begin{equation*}
\left( \frac{\nabla f(x_{k})^{\top }d_{k}}{\Vert d_{k}\Vert }\right)^{2}
\geq \frac{c^{2}}{\mu ^{6}\alpha _{\max }^{2}}\Vert \nabla f(x_{k})\Vert^{2}.
\end{equation*}
By using \eqref{Eq1H1H2}, the last inequality becomes
\begin{equation}\label{DRSEestkapa}
f(x_{k+1})
\leq \mathcal{T}_{k}-\zeta \left( \frac{\nabla f(x_{k})^{\top}d_{k}}{\Vert d_{k}\Vert }\right) ^{2}
\leq \mathcal{T}_{k}-\frac{c^{2}\zeta}{\mu ^{6}\alpha _{\max }^{2}}\Vert \nabla f(x_{k})\Vert ^{2}.
\end{equation}
Note that $\kappa =\frac{c^{2}\zeta }{\mu ^{6}\alpha _{\max }^{2}}$.
From \eqref{eq7} and Proposition \ref{lm3}, we obtain
\begin{align}
\mathcal{T}_{k+1}
& =\eta _{k+1}\mathcal{T}_{k}+(1-\eta _{k+1})f(x_{k+1})\notag  \label{eq1.55} \\
& \leq \eta _{k+1}\mathcal{T}_{k}+(1-\eta _{k+1})
\left(\mathcal{T}_{k}-\kappa \Vert \nabla f(x_{k})\Vert ^{2}\right)   \notag \\
& =\mathcal{T}_{k}-\kappa (1-\eta _{k+1})\Vert \nabla f(x_{k})\Vert ^{2}\notag \\
& \leq \mathcal{T}_{k}-\kappa (1-\eta _{\max })\Vert \nabla f(x_{k})\Vert^{2},
\end{align}
which implies that
\begin{equation*}
\Vert \nabla f(x_{k})\Vert ^{2}\leq \frac{\mathcal{T}_{k}-\mathcal{T}_{k+1}}{\kappa (1-\eta _{\max })}.
\end{equation*}
Note that the sequence $\{\mathcal{T}_k\}$ is convergent by part (a),
therefore, we conclude that
$$ \lim_{k\to\infty}\|\nabla f(x_k)\| = 0.$$

(c)	Proposition \ref{lm5} shows that $x_k\in \mathcal{L}$ for $k\geq 0$, and therefore,
$\{x_k\}$ is bounded by the assumption (A1).
Then, every subsequence of the sequence $\{x_k\}$ converges.
Consider an arbitrary subsequence $\{x_{k_j}\}$ of $\{x_k\}$ converges to a point $x^*\in K$.
Then, $x^*$ is an accumulation point of the sequence $\{x_k\}$.
This together with Proposition \ref{lm7} and the relation \eqref{eq1.42}
implies that $x^*$ is a solution of VIP \eqref{Statpoint:eq1}.
\qed
\end{proof}

\begin{remark}
Note that in Theorem \ref{th1}, we have not assumed the convexity of $f$.
However, such an assumption is taken in \cite[Theorem 3.5]{YWH18}.
\end{remark}


\section{R-linear and sub-linear convergence}\label{S:4}	


Recall that a sequence $\{x_k\}$ is said to be R-linear convergent to a limit $x^*$
if there exists a constant $c \in (0, 1)$ such that
$$ \|x_{k+1} - x^*\| \leq c \|x_k - x^*\|, \quad \mbox{for sufficiently large } k \geq 0.$$
A sequence $\{x_k\}$ is said to converge with a $\mathcal{O}(1/k)$ rate of convergence to $x^*$ if
$$ \|x_k - x^*\| = \mathcal{O}\left(\frac{1}{k}\right),$$
which means there exist a constant $m > 0$ and a sufficiently large $k_0>0$ such that
$$ \|x_k - x^*\| \leq \frac{m}{k}, \quad \forall k \geq k_0.$$
It is worth to mention that the error of an iterative method decreases at a constant rate with each iteration
in R-linear convergence.
In contrast, the $\mathcal{O}(1/k)$ rate of convergence describes a convergence rate where the error decreases
proportionally to $1/k$, signifying a slower convergence rate than linear convergence.

\vspace{5pt}

We now establish linear convergence for the proposed modified
non-monotone line search Algorithm \ref{A:1} when $f$ is strongly quasiconvex.

\begin{theorem}\label{tm2}
Let $K$ be a nonempty closed and convex subset of $\mathbb{R}^n$ and
$f: K\to \mathbb{R}$ be a differentiable  strongly quasiconvex function with modulus $\gamma >0$.
Assume that $x^{*}$ is the unique solution of the minimization problem \eqref{P1}
and the assumptions {\rm (A1)--(A3)} hold.
Let $\{x_k\}$ be a sequence generated by Algorithm \ref{A:1}.
Let $\mu_k\in [1,\mu]$ and assume that there exists $0 < c < 1$ such that
direction assumption \eqref{eq:direction_assumption} holds, then there exists $\theta \in (0,\,1)$ such that
\begin{align}\label{eq1.46}
f(x_k) - f(x^*) \leq \theta^k(f(x_0)- f(x^*)), \quad\forall k>0.
\end{align}
\end{theorem}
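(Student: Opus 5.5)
The plan is the standard Zhang--Hager style argument for non-monotone schemes: use the sufficient decrease already recorded in the proof of Theorem \ref{th1}(b), together with the gradient-dominance inequality of Remark \ref{rm2}(b), to obtain a geometric contraction of $\mathcal{T}_k - f(x^*)$. Then transfer that contraction to $f(x_k)-f(x^*)$.

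\textbf{Step 1: one-step decrease.} From \eqref{eq1.55} we have
\[
\mathcal{T}_{k+1} \leq \mathcal{T}_k - \kappa(1-\eta_{\max})\|\nabla f(x_k)\|^2 ,
\]
with $\kappa = c^2\zeta/(\mu^6\alpha_{\max}^2)$. Remark \ref{rm2}(b) gives
\[
\|\nabla f(x_k)\|^2 \geq \frac{\gamma^2}{L}\bigl(f(x_k)-f(x^*)\bigr).
\]
This uses strong quasiconvexity with modulus $\gamma$ and the $L$-Lipschitz gradient from (A3). Subtracting $f(x^*)$ and writing $E_k := \mathcal{T}_k - f(x^*)\ge 0$ (nonnegative by Theorem \ref{th1}(a)) yields
\[
E_{k+1} \le E_k - \rho\bigl(f(x_k)-f(x^*)\bigr), \qquad \rho := \frac{\kappa(1-\eta_{\max})\gamma^2}{L}.
\]

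\textbf{Step 2: the main obstacle.} The right-hand side involves $f(x_k)-f(x^*)$, not $E_k$, and $f(x_k)\le\mathcal{T}_k$ points the wrong way. I would handle this by expressing $E_k$ through $f(x_k)$. From \eqref{eq7},
\[
E_k = \eta_k E_{k-1} + (1-\eta_k)\bigl(f(x_k)-f(x^*)\bigr),
\]
so
\[
f(x_k)-f(x^*) = \frac{E_k-\eta_k E_{k-1}}{1-\eta_k} \;\ge\; \frac{E_k - \eta_{\max}E_{k-1}}{1-\eta_k} \;\ge\; E_k-\eta_{\max}E_{k-1}.
\]
The middle inequality holds because $E_{k-1}\ge 0$. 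Substituting gives $E_{k+1}\le (1-\rho)E_k + \rho\eta_{\max}E_{k-1}$. This two-term recursion has a contraction factor $\sigma<1$: its characteristic root solves $\sigma^2=(1-\rho)\sigma+\rho\eta_{\max}$, and at $\sigma=1$ the right side is $1-\rho(1-\eta_{\max})<1$. The constants fit because $\zeta<1$, $c<1$, $\mu>1$, and, from (A3) together with Remark \ref{rm2}(b), $\gamma^2\le L^2$; these should give $\rho<1$. Induction then gives $E_k\le C\sigma^k E_0$.

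As a cleaner alternative, Zhang--Hager introduce $Q_k$ with $\mathcal{T}_k$ a weighted average of $f(x_0),\dots,f(x_k)$. Since the decreases $\mathcal{T}_k - f(x_{k+1})\ge \kappa\|\nabla f(x_k)\|^2$ accumulate, one can also bound $f(x_k)-f(x^*)\ge E_k - \text{(small)}$. I would try the two-term recursion first and fall back on this.

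\textbf{Step 3: transfer and absorb the constant.} Since $f(x_k)-f(x^*)\le E_k$ and $E_0=f(x_0)-f(x^*)$, we get $f(x_k)-f(x^*)\le C\sigma^k(f(x_0)-f(x^*))$. To remove $C$ and match \eqref{eq1.46}, I would avoid the second-order recursion where possible. One option is a first-order estimate for the Lyapunov quantity $E_k+\eta_{\max}E_{k-1}$, or, once more, the combination $E_k+tE_{k-1}$ with suitable $t$, which should contract with factor $\theta\in(0,1)$ without a leading constant. Failing that, I would enlarge $\theta$ to $\theta=\max\{\sigma, C^{1/k_0}\sigma\}$ and adjust. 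Verifying the constants in this last step is where I expect the routine but fiddly work.
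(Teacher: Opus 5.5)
Your argument is correct in substance but takes a different route from the paper. The paper applies Remark~\ref{rm2}(b) at $x_{k+1}$, not at $x_k$. It first proves $\|\nabla f(x_{k+1})\|\le b\|\nabla f(x_k)\|$ with $b=1+L\mu^3\alpha_{\max}/c$, using $\lambda_k\le 1/c$ from Proposition~\ref{lm6}(b), the bound $\|d_k\|\le\mu^3\alpha_{\max}\|\nabla f(x_k)\|$, and the Lipschitz gradient. It then splits into two cases. If $\|\nabla f(x_k)\|^2\ge b_1E_k$, inequality \eqref{eq1a} contracts directly. If $\|\nabla f(x_k)\|^2<b_1E_k$, it inserts $f(x_{k+1})-f(x^*)\le \frac{b^2b_1L}{\gamma^2}E_k$ into the averaging identity \eqref{eq7}. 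The result is a one-step contraction $E_{k+1}\le\theta E_k$ with no leading constant.

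You instead use the dominance bound at $x_k$ and absorb the mismatch through \eqref{eq7}, which gives $E_{k+1}\le(1-\rho)E_k+\rho\eta_{\max}E_{k-1}$. This avoids the gradient-growth constant $b$, the upper step bound and the case split, and gives a rate depending only on $\rho$ and $\eta_{\max}$. The cost is a two-step Lyapunov argument.

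That argument closes exactly, so commit to it. Let $\sigma\in[0,1)$ be the largest root of $\sigma^2=(1-\rho)\sigma+\rho\eta_{\max}$ and set $t:=\sigma-(1-\rho)\ge0$, so that $\sigma t=\rho\eta_{\max}$. Then $V_k:=E_k+tE_{k-1}$ satisfies $V_{k+1}\le(1-\rho+t)E_k+\rho\eta_{\max}E_{k-1}=\sigma V_k$. Also $V_1\le(1-\rho+t)E_0=\sigma E_0$, since $E_1\le(1-\rho)E_0$. Hence $f(x_k)-f(x^*)\le E_k\le V_k\le\sigma^kE_0$ for all $k\ge1$, and any $\theta\in(0,1)$ with $\theta\ge\sigma$ works. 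Your other options are weaker. The fallback $\theta=\max\{\sigma,C^{1/k_0}\sigma\}$ does not cover every $k>0$ when $C\sigma\ge1$, and the choice $t=\eta_{\max}$ works only when $\rho>\eta_{\max}$.

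Two small repairs are needed.

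\textbf{The inequality chain in Step~2.} The last step $\frac{X}{1-\eta_k}\ge X$ requires $X\ge0$, and $X=E_k-\eta_{\max}E_{k-1}$ need not be nonnegative. Reorder it as $f(x_k)-f(x^*)=\frac{E_k-\eta_kE_{k-1}}{1-\eta_k}\ge E_k-\eta_kE_{k-1}\ge E_k-\eta_{\max}E_{k-1}$. This is valid because $E_k-\eta_kE_{k-1}=(1-\eta_k)\bigl(f(x_k)-f(x^*)\bigr)\ge0$.

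\textbf{The claim $\rho<1$.} The constants you list do not give $\rho=\kappa(1-\eta_{\max})\gamma^2/L<1$, since nothing forces $\gamma^2/L$ to be small. You do not need it: the Lyapunov step above uses only $\rho>0$, $t\ge0$ and $E_k\ge0$. In any case $0\le E_1\le(1-\rho)E_0$ already forces $\rho\le1$ unless $E_0=0$.
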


\begin{proof}
Using the assumption A1, $\mathcal{L}$ is bounded.
On the other hand from Proposition \ref{lm5}(d), the sequence $\{x_k\}$ lies in $\mathcal{L}$,
and so, $\{x_k\}$ is bounded.
Therefore, from \eqref{eq1.55}, we have
\begin{align}\label{eq1.38}
f(x_{k+1})&\leq  \mathcal{T}_{k}- \kappa(1-\eta_{\max})\Vert \nabla f(x_{k})\Vert ^{2}.
\end{align}
Now, from Proposition \ref{lm1}(b) and \eqref{Eq:3.11}, we have
$$ \|x_{k+1} - x_k\| =\lambda_{k} \|d_k\| \leq \frac{\mu^3\alpha_{\max}}{c}\|\nabla f(x_k)\| =c_1\|\nabla f(x_k)\| ,$$
where $c_1=\frac{\mu^3\alpha_{\max}}{c}$. By the Lipschitz continuity of $\nabla f$, it yields that
$$ \|\nabla f(x_{k+1}) - \nabla f(x_{k})\| \leq L \|x_{k+1} - x_k\| \leq Lc_1\|\nabla f(x_k)\|,$$
that is,
\begin{align}\label{eq1.39}
	\|\nabla f(x_{k+1}) \|
	&=\|\nabla f(x_{k+1}) - \nabla f(x_{k}) + \nabla f(x_{k})\|\notag\\
	&\leq  \|\nabla f(x_{k+1}) - \nabla f(x_{k})\| + \|\nabla f(x_{k})\|\notag\\
	&\leq Lc_1\|\nabla f(x_k)\| + \|\nabla f(x_{k})\|\notag\\
	& = \left(1+ Lc_1\right)\|\nabla f(x_{k})\| :=b\|\nabla f(x_{k})\|,
\end{align}
where $b =  \left(1+ Lc_1\right)$.

Note that from \eqref{eq7} and \eqref{eq1.38}, we have
\begin{align}\label{eq1.40}
\mathcal{T}_{k+1} - f(x^*)
&= \eta_{k+1}\mathcal{T}_{k} + (1-\eta_{k+1})f(x_{k+1}) - f(x^*)\notag\\
& \leq   \eta_{k+1}(\mathcal{T}_{k} - f(x^*))+ (1-\eta_{k+1})(f(x_{k+1}) - f(x^*))\\
& \leq \eta_{k+1}(\mathcal{T}_{k} - f(x^*))+ (1-\eta_{k+1})(\mathcal{T}_{k}
  - \kappa(1-\eta_{\max}) \|\nabla f(x_k)\|^2 - f(x^*))\notag\\
& = \mathcal{T}_{k} - f(x^*) - (1-\eta_{k+1})\kappa(1-\eta_{\max}) \|\nabla f(x_k)\|^2\notag\\
&\label{eq1a}\leq \mathcal{T}_{k} - f(x^*) - (1-\eta_{\max})^2\kappa \|\nabla f(x_k)\|^2.
\end{align}
Now, we show that
\begin{equation}\label{eq1.43}
\mathcal{T}_{k+1} - f(x^*) \leq \theta(\mathcal{T}_{k} - f(x^*)),
\end{equation}
where
$$\theta =1-(1-\eta_{\max})^2b_1\kappa,\quad  b_1= \frac{1}{\bar{\kappa} + \frac{b^2 L}{\gamma^2}} < 1
\quad \mbox{ and }\quad\bar{\kappa}=(1-\eta_{\max})\kappa.$$
Since $\zeta < 1$, by Proposition \ref{lm3} and $c < 1$, we have
$$ \kappa = \frac{c^{2}\zeta }{\mu ^{6}\alpha _{\max }^{2}} <1.$$
This together with $\eta_{max} < 1$ implies that $0< \theta < 1$.

To prove \eqref{eq1.43}, we have two cases.

\noindent{\sc Case 1:} If $\|\nabla f(x_k)\|^2 \geq b_1(\mathcal{T}_{k} - f(x^*))$. Then from \eqref{eq1a}, we get
\begin{align}\label{eq1.41}
\mathcal{T}_{k+1} - f(x^*)
	&\leq \mathcal{T}_{k} - f(x^*) - b_1(1-\eta_{\max})^2\kappa(\mathcal{T}_{k} - f(x^*))\notag\\
	&=(1-b_1(1-\eta_{\max})^2\kappa)(\mathcal{T}_{k} - f(x^*))\\
	&=\theta (\mathcal{T}_{k} - f(x^*)).\notag
\end{align}

\noindent{\sc Case 2:} If $\|\nabla f(x_k)\|^2 < b_1(\mathcal{T}_{k} - f(x^*))$. Since $f$ is strongly quasiconvex, then by Remark \ref{rm2}(b), we have
$$ f(x_{k+1} )- f(x^*) \leq \textcolor{red}{\frac{L}{\gamma^2}}\|\nabla f(x_{k+1})\|^2.$$
This together with the inequality \eqref{eq1.39} implies that
$$ f(x_{k+1}) - f(x^*) \leq \frac{L}{\gamma^2} b^2\|\nabla f(x_k)\|^2,$$
and therefore,
$$ f(x_{k+1}) - f(x^*) \leq \frac{b^2b_1 L}{\gamma^2} (\mathcal{T}_{k} - f(x^*)).$$
Then by the inequality \eqref{eq1.40} and the above inequality, we obtain
\begin{align*}
\mathcal{T}_{k+1} - f(x^*)
    & \leq   \eta_{k+1}(\mathcal{T}_{k} - f(x^*))+ (1-\eta_{k+1})(f(x_{k+1}) - f(x^*))\\
	& \leq \eta_{k+1}(\mathcal{T}_{k} - f(x^*))+ (1-\eta_{k+1})\frac{b^2b_1 L}{\gamma^2}(\mathcal{T}_{k} - f(x^*))\\
	&= (\eta_{k+1} + \frac{b^2b_1 L}{\gamma^2}(1-\eta_{k+1}) )(\mathcal{T}_{k} - f(x^*))\\
	&=(\eta_{k+1} + (1-b_1\bar{\kappa})(1-\eta_{k+1}) )(\mathcal{T}_{k} - f(x^*))\\
	&\leq (1-(1-\eta_{\max}) b_1\bar{\kappa})(\mathcal{T}_{k} - f(x^*))\\
	&=(1-(1-\eta_{\max})^2 b_1\kappa)(\mathcal{T}_{k} - f(x^*))\\
	&=\theta(\mathcal{T}_{k} - f(x^*)).
\end{align*}
As $\mathcal{T}_{0} = f(x_0)$ and $f(x_k) \leq \mathcal{T}_k$,
from {\sc Case 1} and {\sc Case 2}, we obtain
\begin{align*}
		f(x_{k}) - f(x^{*}) & \leq \left( \mathcal{T}_{k} - f(x^{*}) \right)\\
		& \leq \theta \left( \mathcal{T}_{k-1} - f(x^{*}) \right)\\
		& \leq \theta^2 \left( \mathcal{T}_{k-2} - f(x^{*}) \right)\\
		&\,\,\,\vdots\\
		& \leq \theta^{k} \left( \mathcal{T}_{0} - f(x^{*}) \right)
		= \theta^{k} \left( f(x_{0}) - f(x^{*}) \right).
	\end{align*}
This completes the proof.
\qed
\end{proof}	

Theorem \ref{tm2} states that the sequence generated by Algorithm \ref{A:1}
is linearly convergent to a solution of the minimization problem \eqref{P1},
and consequently, to a solution of VIP \eqref{Statpoint:eq1} as $f$ is a strongly quasiconvex function.


\section{Numerical Applications}\label{S:5}


This section is devoted to the comparison of Algorithm \ref{A:1} (SGM, in short),
algorithm proposed in \cite{YWH18} (YWH, in short) and in \cite{ZH04} (ZH, in short).
For the computational experiments, we consider the following parameter settings:
\begin{itemize}
	\item [(i)]Step size: $\alpha_k = 1-\frac{1}{(n+1)^{0.5}}$
	\item [(ii)]Regularization parameter: $\lambda_0 =1$
	\item [(iii)]Decay parameters:  $\mu_k = 1+\frac{1}{(n+1)^2}$ and $\eta_k = 1-\frac{1}{(n+1)^{0.5}}$.

\end{itemize}
For SGM, we set the parameters $\mu = 1.25$, $\beta = 0.5$, $\delta_1= 0.001$ and $\delta_2=0.0001$.
In this setting, we evaluate the performance of SGM, YWH, and ZH
for Examples \ref{ex1}, \ref{ex2}, and \ref{ex3}.
In this study, we focus on analyzing the convergence behaviour of the objective function $f$
by examining the decrease in $\|f(x_{k+1}) - f(x^*)\|$ and $\|\nabla f(x_k)\|$ at each iteration.
All computations are implemented by using MATLAB version R2024b.

For both SGM and YWH, we employ the following scaled projection operator
to ensure that the iterates remain within the feasible set $ K $:
\begin{equation}\label{eq:projection}
P_{K, \mathcal{D}}(x) = \argmin_{y \in K} \left\{\|y - x\|_\mathcal{D}\right\}
= \argmin_{y \in K} \left\{ \frac{1}{2} y^{\top} \mathcal{D} y - y^{\top} \mathcal{D} x \right\},
\end{equation}
where $\mathcal{D}$ is the scaling matrix that adapts the geometry of the space.
This projection is computed with respect to the scaled norm
$\|y - x\|_\mathcal{D} = \sqrt{(y - x)^{\top} \mathcal{D} (y - x)}$.

On the other hand, for ZH, we use the standard Euclidean projection without the scaling matrix, i.e.,
$$ P_{K}(x) = \argmin_{y \in K} \left\{ \|y - x\|_2 \right\}.$$

Scaling matrices are helpful in gradient-based optimization algorithms,
particularly for high-dimensional and complex problems.
They adapt the algorithm to the optimized function's shape,
allowing it to handle areas with uneven curvature more efficiently.
By improving the problem's stability, scaling matrices balance out steep or flat regions, making updates more reliable.
They also enable the algorithm to take more effective steps, reducing the number of iterations
and speeding up convergence.
In cases with large-scale non-convex fractal programming and quadratic programming  or structured data,
scaling matrices focus the algorithm's attention on the critical, non-zero components, enhancing overall efficiency.

In SGM and YWH, the scaling matrix $\mathcal{D}$ is used in the projection step,
leading to faster convergence compare to ZH, where standard Euclidean projection is used.
The numerical experiments demonstrate that SGM converges faster,
with fewer iterations and lower CPU time, particularly in Example \ref{ex2} and \ref{ex3},
where the large-scale structure is pronounced.
Also, the modifications in \eqref{eq3} and \eqref{eq7} contribute to a more stable (non-oscillatory)
convergence behavior than in YWH.
This makes SGM effective for optimization in sparse and high-dimensional settings
(see the Figures \ref{fig1},  \ref{fig3} and \ref{fig:convergence}).
	
\subsection{Fractional programming}

Fractional programming is used in production planning, financial and corporate planning,
health care and hospital planning, and ratio problems;
this type of problem has garnered a lot of attention and research.
Numerous approaches to address these issues are given in \cite{BBT06}.	

\begin{example}\label{ex1}(See \cite{HM21})
Consider a feasible set
$$K  = \{ X\in \mathbb{R}^{5\times1} : -1\leq x_i\leq 1, i=1,2,\ldots, 5 \},$$
where $\mathbb{R}^{5\times 1}$ denotes the collection of $5 \times 1$ matrices,
and define a fractional function $f:\mathbb{R}^{5\times 1}\to \mathbb{R}$ by
$$ f(X) = \frac{X^{\top}WX + w_1^{\top}X +v_1}{w_2^{\top}X+v_2},$$
where $w_1 = (1, 2, -1, -2, 1)^{\top}\in\mathbb{R}^{5\times1}$,
$w_2 = (1, 0, -1, 0, 1)^{\top}\in\mathbb{R}^{5\times1}$,
$v_1=-2, \,v_2=20$ and $W$ is a symmetric positive definite matrix, given by
$$ W=\begin{pmatrix}
			5 & -1 &2&0&2\\
			-1 & 6 &-1&3&0\\
			2 & -1 &3&0&1\\
			0 & 3 &0&5&0\\
			2 & 0 &1&0&4
	\end{pmatrix}.$$
Note that $f$ is not convex on $K$, but it is pseudo-convex on $K$, that is,
$$\nabla f(X)^{\top} (Y-X) \geq 0 \, \Rightarrow \, f(X) \leq f(Y).$$
The gradient $\nabla f$ of $f$ is given by
$$ \nabla f(X) = \frac{(w_2^{\top}X+v_2)(2WX + w_1) - w_2(X^{\top}WX +w_1^{\top}X + v_1) }{(w_2^{\top}X+v_2)^{2}},$$
and $\nabla f$ is Lipschitz continuous on $K$ with the constant $L \approx 149$ (see \cite{HM21}). Moreover, the Hessian of $f$, given by
\[
\nabla^2 f(X)
\;=\;
\frac{2W}{w_2^{\top}X+v_2}
\;-\;
\frac{(2W X + w_1)\,w_2^\top + w_2\,(2W X + w_1)^\top}{(w_2^{\top}X+v_2)^2}
\;+\;
\frac{2(2W X + w_1)\,w_2\,w_2^\top}{(w_2^{\top}X+v_2)^3}.
\]


By using the fmin formula in Matlab, we obtain a solution
$f(X^*) = -0.158368$
of the minimization problem \eqref{P1}.
Moreover, we get $\|\nabla f(X^*)\| < 10^{-5}$.
Since $f$ is pseudo-convex, $X^*$ is a solution VIP \eqref{Statpoint:eq1}.
To further analyze the convergence behaviour of SGM, YWH and ZH,
we consider the initial point $ X_0 = ones(5,1)$ and
using the scaling matrix $\mathcal{D} = \nabla^2 f(X)$.
The results are summarized in Table~\ref{Table1}, where we compare the number of iterations (No. Itr.),
CPU time, and the error obtained for three methods: SGM, YWH, and ZH.
The results clearly show the efficiency of proposed method, especially in
terms of convergence speed and error minimization.		
		
\begin{table}[!htbp]
	\centering
	   \begin{minipage}{\textwidth}
			\rule{\textwidth}{1pt}
				\begin{tabular*}{\textwidth}{@{\extracolsep{\fill}}lccccccc@{\extracolsep{\fill}}}
					\multicolumn{1}{c}{Initial Point} & \multicolumn{3}{c}{$\|f(X_k) - f(X^*)\|$}
                    & \multicolumn{3}{c}{$\|\nabla f(X_k)\|$} \\
					\cline{2-4} \cline{5-7} \\
					$ X_0 = {\rm ones}(5,1)$ & SGM & YWH & ZH & SGM & YWH & ZH \\
					\hline
					& & & & & & \\
					No. Itr. & 31 & 51 & 83 & 44 & 100 & 100 \\
					& & & & & & \\
					CPU (s) & 1.11 & 1.9 & 2.52 & 1.01 & 1.39 & 1.96 \\
					& & & & & & \\
					Error & 3.43e-7 & 3.07e-7 & 3.8e-7 & 2.26e-5 & 1.14e-3 & 8.13e-5 \\
					& & & & & & \\
					\hline
			\end{tabular*}
		\end{minipage}
		  \caption{Convergence Results of Example \ref{ex1}}\label{Table1}
\end{table}

The convergence analysis is further demonstrated graphically in Figure~\ref{fig1}.
Subfigure (a) shows the results for $\|f(X_k) - f(X^*)\|$,
and Subfigure (b) shows the results for $\|\nabla f(X_k)\|$, with the corresponding data presented in Table~\ref{Table1}.
		

\begin{figure}
	\centering
	\subfloat[Convergence Analysis of $\|f(X_k) - f(X^*)\|$ results in Table \ref{Table1}]{%
		\resizebox*{5cm}{!}{\includegraphics{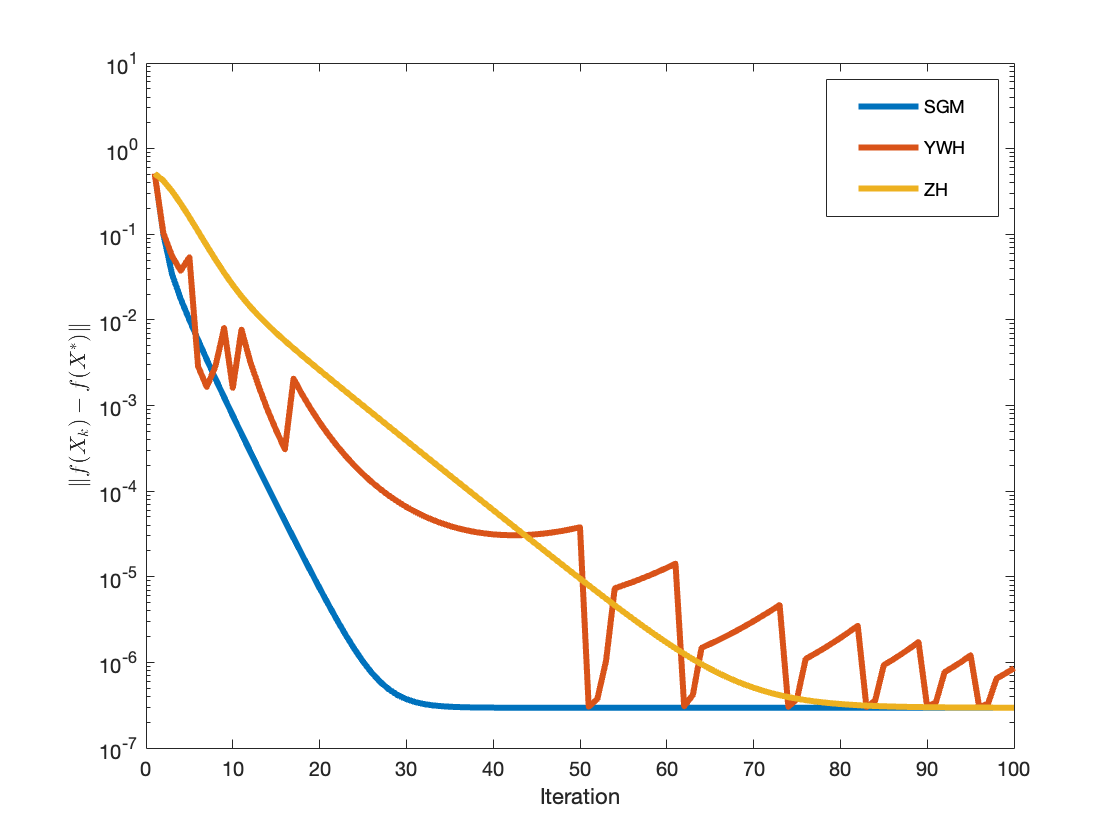}}}\hspace{5pt}
	\subfloat[Convergence Analysis of $\|\nabla f(X_k)\|$ results in Table \ref{Table1}]{%
		\resizebox*{5cm}{!}{\includegraphics{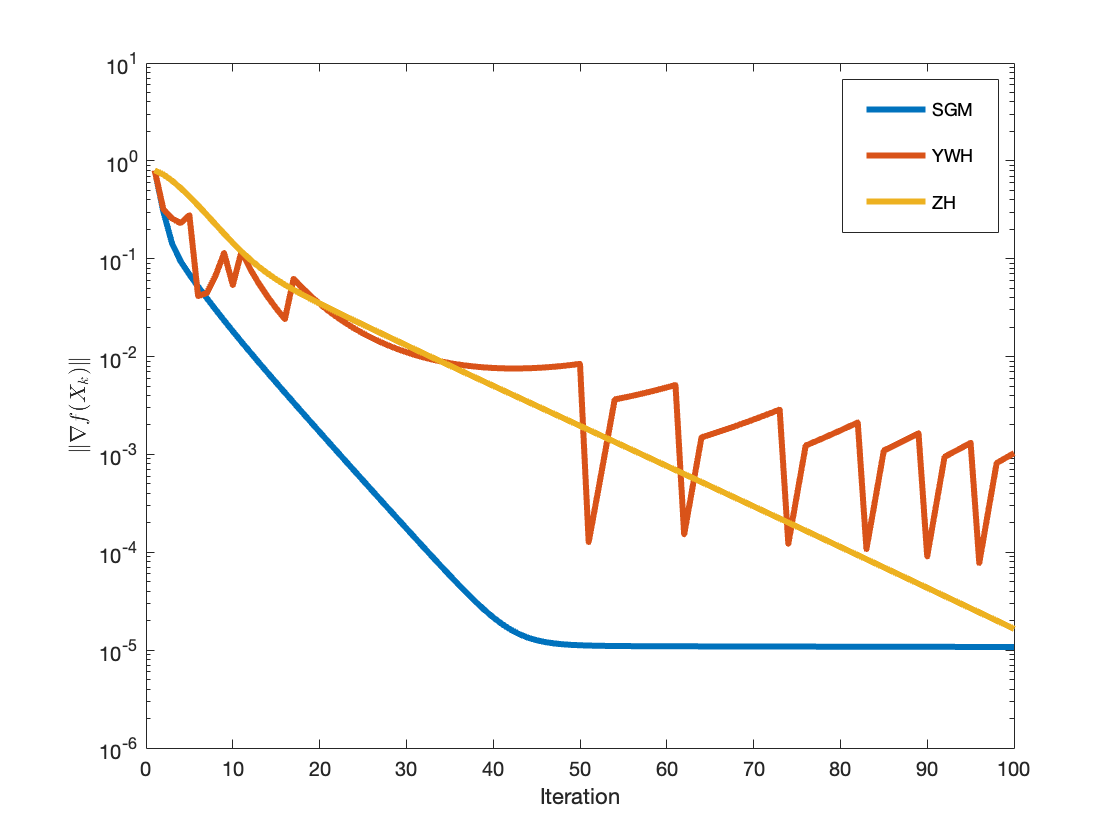}}}
	\caption{Convergence Analysis of Example \ref{ex1}} \label{fig1}
\end{figure}
		
In conclusion, SGM demonstrates superior convergence behaviour,
significantly when scaling matrix $ \mathcal{D} $ is used.
The numerical and graphical results confirm the effectiveness of
our approach the objective function is non-convex.
\end{example}		

\subsection{Large-scale fractional programming}				
	
\begin{example}\label{ex2}
	Let \(A,B \in \mathbb{R}^{n\times n}\) be two given matrices and let \(a,b \in \mathbb{R}^n\), \(\alpha,\beta \in \mathbb{R}\) be fixed parameters.
	Assume that \(A\) is symmetric and positive definite, and let \(\lambda_{\min}(A)>0\) denote its smallest eigenvalue.
	Consider two constants \( m < M\), and define the feasible set
	\[
	K = \Big\{ X \in \mathbb{R}^n : m \le \tfrac{1}{2}X^\top B X + b^\top X + \beta \le M \Big\}.
	\]
	Suppose \(B\) is positive semidefinite  and
	\(\tfrac{1}{2}X^\top A X + a^\top X + \alpha \le 0\) for every \(X \in K\).
	Then, the function \(f: K \to \mathbb{R}\) defined by
	\begin{equation}\label{LFP}
	f(x) =
	\frac{
		\tfrac{1}{2}X^\top A X + a^\top X + \alpha
	}{
		\tfrac{1}{2}X^\top B X + b^\top X + \beta
	},\quad \forall X\in \mathbb{R}^{n \times 1}
	\end{equation}
	is strongly quasiconvex (see, \cite[Corrolary 4.1]{ILMY24}) on \(K\) with modulus
	\(
	\gamma' = \frac{\lambda_{\min}(A)}{M} > 0.
	\)
	Let
	\[
	f(x)=\frac{p(x)}{q(x)},\qquad
	p(x)=\tfrac{1}{2}x^\top A x + a^\top x + \alpha,\qquad
	q(x)=\tfrac{1}{2}x^\top B x + b^\top x + \beta.
	\]
	Set $\nabla q(x)=A x + a$, $ \nabla d(x)=B x + b$, $\nabla^2 q(x)=A$ and $\nabla^2 d(x)=B$.	Gradient of $f$, given by
	\[
	\nabla f(x)=\frac{d(x)\,\nabla q(x)-q(x)\,\nabla d(x)}{[d(x)]^{2}}
	\]
	Moreover, Hessian of $f$, given by
	\[
	\nabla^2 f(x)
	=\frac{d(x)\,\nabla^2 q(x)-q(x)\,\nabla^2 d(x)}{d(x)^2}
	-\frac{\,g(x) \nabla d(x)^{\top}\;+\;\nabla d(x) g(x)^{\top}\,}{d(x)^2}\,.
	\]
	where
	\[
	g(x)\;:=\;\nabla q(x) - \frac{q(x)}{d(x)}\,\nabla d(x).
	\]
	
	Consider the large-scale  fractional programming problem defined in \eqref{LFP} for $n=512$ with the setups of the data matrix $A, B \in \mathbb{R}^{n \times n} $ defined by $randn(n,n)$, $a = 0.1 * randn(n,1)$, $b = 0.05 * randn(n,1)$, $\alpha = 1.0$, $\beta  = 100$, $m=-1$ and $M=1$.
	We also define the scaling matrix $\mathcal{D}=\nabla^2 f(x)$.
	
	It follows from fmin formula in Matlab, we obtain a solution $f(X^*) = -3.347045e-02$
	of the minimization problem \eqref{P1}, and $\|\nabla f(X^*)\| < 10^{-5}$.
	Since $f$ is convex, $X^*$ is also a solution of VIP \eqref{Statpoint:eq1}.
	We solve the sparse problem \eqref{LFP} for each combination of algorithm and scaling matrix,
	and the results are presented in Table~\ref{Table3} and Figure~\ref{fig3}.
	\begin{table}[!htbp]
		\begin{center}
			\begin{minipage}{\textwidth}
				\rule{\textwidth}{1pt}
				\begin{tabular*}{\textwidth}{@{\extracolsep{\fill}}lccccccc@{\extracolsep{\fill}}}\\
					\multicolumn{1}{c}{Initial point} &\multicolumn{3}{c}{$\|f(X_k) - f(X^*)\|$ }  &\multicolumn{3}{c}{$\|\nabla f(X_k)\|$}\\
					\cline{2-4} \cline{5 - 7}\\
					$ X_0 = {\rm ones}(n,1)$ & SGM&YWH& ZH&SGM&YWH& ZH  \\
					\hline
					&& &&         &&        \\
					No.Itr. & 500&500&500     &500&500  &500         \\
					&& &&           &&        \\
					CPU(s) &1.53& 2.18&2.09&  1.14  & 1.89& 2.03         \\
					&& &&           &&        \\
					Error & 9.57e-8&2.2e-3&1.4e-1     &7.7e-3&7.2e-2  &2.9e-1        \\
					&& &&           &&    \\
					\hline
				\end{tabular*}
			\end{minipage}
			\vspace{\abovecaptionskip}
			\caption{Convergence table of the Example \ref{ex2}}\label{Table3}
		\end{center}
	\end{table}	
\begin{figure}
	\centering
	\subfloat[Convergence Analysis of $\|f(X_k) - f(X^*)\|$ in Example \ref{ex2} and results shown in Table \ref{Table3}]{%
		\resizebox*{5cm}{!}{\includegraphics{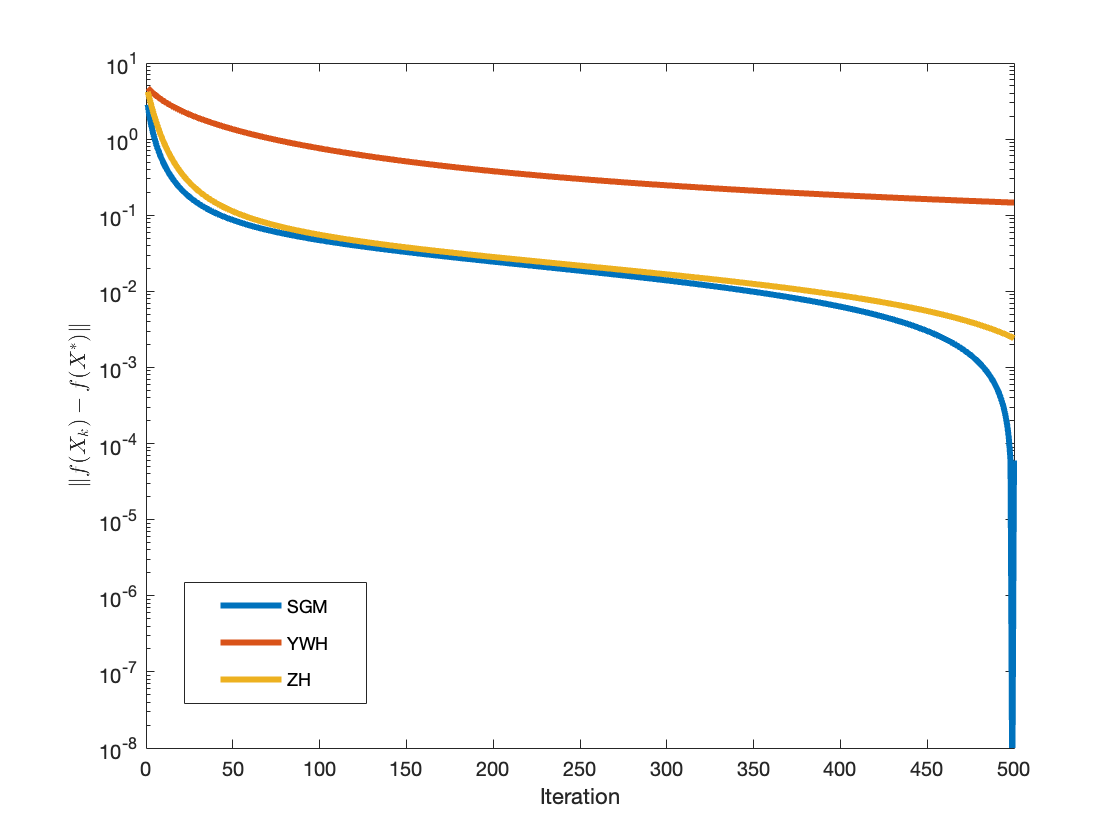}}}\hspace{5pt}
	\subfloat[Convergence Analysis of $\|\nabla f(X_k)\|$ in Example \ref{ex2} and results shown in Table \ref{Table3}]{%
		\resizebox*{5cm}{!}{\includegraphics{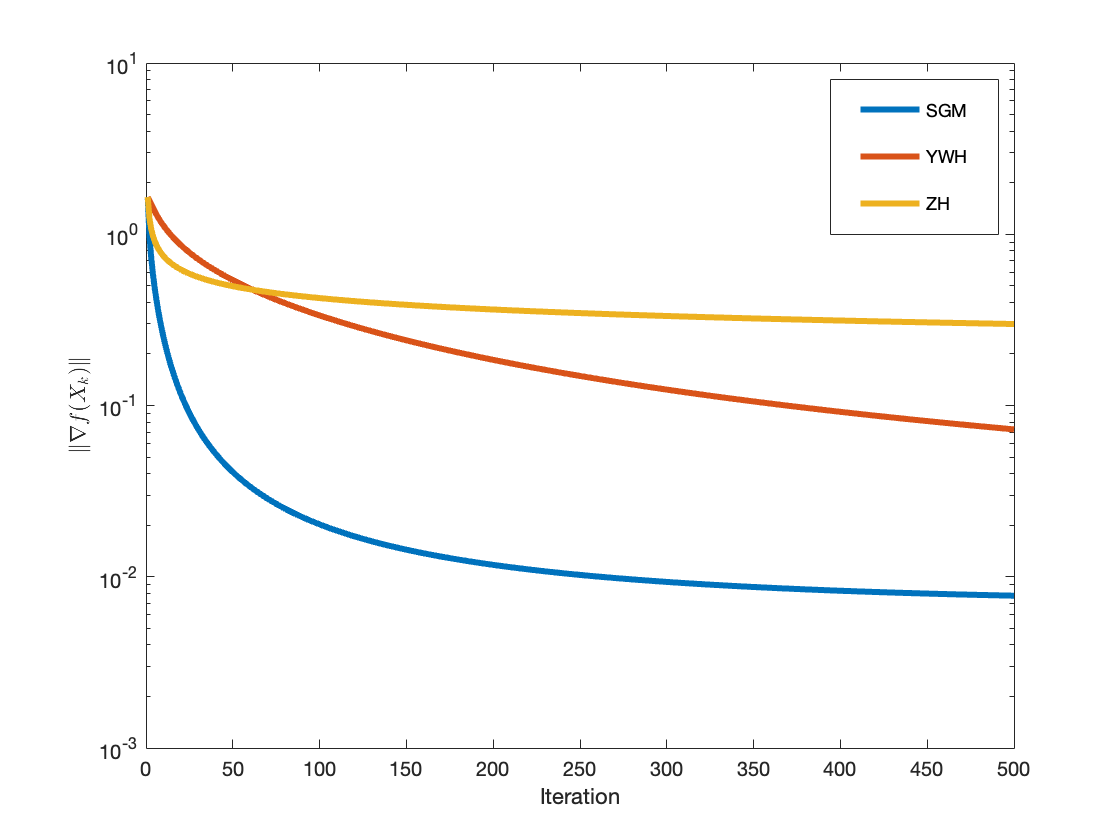}}}
	\caption{Convergence Analysis of Example \ref{ex2}} \label{fig3}
\end{figure}

\end{example}
	
\subsection{Quadratic programming}

Quadratic programming is to optimize a quadratic objective function with linear constraints.
It is widely used in various fields such as portfolio optimization in finance,
parameter estimation in machine learning,
optimal control in engineering and resource allocation in operations research, see \cite{BV04}.
The versatility and effectiveness of quadratic programming makes it valuable
for tackling complex optimization problems with a quadratic objective function and linear constraints.

\begin{example}\label{ex3}
Consider a feasible set $K  = [-1,1]^n$ and
a quadratic function $f : \mathbb{R}^{n\times 1}\to \mathbb{R}$ defined by
$$ f(X) = X^\top V X - p^\top V p + p^\top W(X-p), \quad \forall X\in \mathbb{R}^{n \times 1},$$
where $V$ and $W$ are  symmetric positive definite matrices and $p\in \mathbb{R}^{n \times1}$. Consider $n=256$, then it can be easily to find the smallest eigen value of $V$ is $\gamma = 1.4942\times 10^{-4}$. Note that $f$ is strongly quasiconvex with modulus $\gamma$ (see, \cite[Example 4.1]{IL22}).
		
Now for numerical viewpoint, we consider $p = (1, 0, \ldots, 0)^{\top} \in \mathbb{R}^{n \times 1}$
and the symmetric positive definite matrices
$$
V=\begin{pmatrix}
	2 & 1 & 0 & \cdots & 0\\[4pt]
	1 & 2 & 1 & \ddots & \vdots\\[4pt]
	0 & 1 & 2 & \ddots & 0\\[4pt]
	\vdots & \ddots & \ddots & \ddots & 1\\[4pt]
	0 & \cdots & 0 & 1 & 2
\end{pmatrix}\in\mathbb{R}^{n\times n},\qquad
W=\begin{pmatrix}
	3 & 0.5 & 0 & \cdots & 0\\[4pt]
	0.5 & 3 & 0.5 & \ddots & \vdots\\[4pt]
	0 & 0.5 & 3 & \ddots & 0\\[4pt]
	\vdots & \ddots & \ddots & \ddots & 0.5\\[4pt]
	0 & \cdots & 0 & 0.5 & 3
\end{pmatrix}\in\mathbb{R}^{n\times n}.
$$
The gradient $\nabla f$ and  Hessian $\nabla^2 f$ of $f$ are given by
$$ \nabla f(X) = 2VX+Wp \mbox{ and } \nabla^2 f(X) = 2V,\quad \forall X\in \mathbb{R}^{n \times 1}.$$

It can be easily seen that $\nabla f$ is Lipschitz continuous on $K$ with Lipschitz constant $L=7.99$.	
We use the  Hessian $\nabla^2 f(X)$ as a scaling matrix $\mathcal{D}$ for the numerical analysis.
By using the fmin formula in Matlab, we obtain $f(X^*) = -6.16771$ a solution
of the minimization problem \eqref{P1}.
Moreover, $\|\nabla f(X^*)\| < \varepsilon=10^{-6}$.
Since $f$ is convex, $X^*$ is a solution of VIP \eqref{Statpoint:eq1}.

We implement SGM, YWH, and ZH with the initial point $ X_0 = (0.8*{\rm ones}(5,1)$,
and compare their performance for $\|f(X_k) - f(X^*)\|$ and $\|\nabla f(X_k) \|$
with the scaling matrix $ \mathcal{D} = 2V$.
The convergence results are shown in Table~\ref{Table2} and illustrated in Figure~\ref{fig:convergence}.
		
\begin{table}[!htbp]
			\centering
			\begin{minipage}{\textwidth}
				\rule{\textwidth}{1pt}
				\begin{tabular*}{\textwidth}{@{\extracolsep{\fill}}lccccccc@{\extracolsep{\fill}}}
					\multicolumn{1}{c}{Initial Point} & \multicolumn{3}{c}{$\|f(X_k) - f(X^*)\|$}
                    & \multicolumn{3}{c}{$\|\nabla f(X_k)\|$} \\
					\cline{2-4} \cline{5-7} \\
					$ X_0 = {\rm ones}(n,1)$ & SGM & YWH & ZH & SGM & YWH & ZH \\
					\hline
					& & & & & & \\
					No. Itr. & 27 & 54 & 100 & 45 & 64 & 100 \\
					& & & & & & \\
					CPU (s) & 0.68 & 4.55 & 3.58 & 0.75 & 4.87 & 5.57 \\
					& & & & & & \\
					Error & 3.5e-6 & 3.4e-6 & 1.4e-3 & 1.6e-6 & 1.4e-6 & 1.7e-2 \\
					& & & & & & \\
					\hline
				\end{tabular*}
			\end{minipage}
			\caption{Convergence results of Example \ref{ex3}}
			\label{Table2}
\end{table}
		

\begin{figure}
	\centering
	\subfloat[Convergence Analysis of $\|f(X_k) - f(X^*)\|$ in Example \ref{ex3} and results shown in Table \ref{Table2}]{%
		\resizebox*{5cm}{!}{\includegraphics{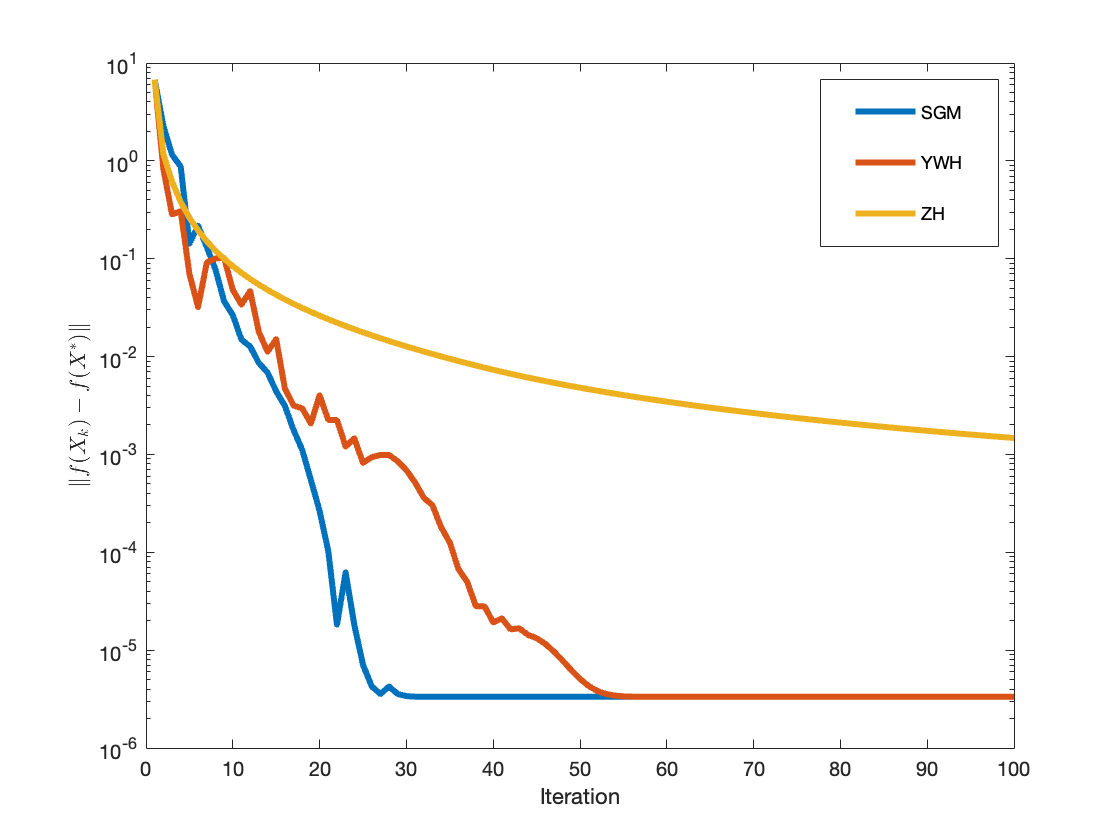}}}\hspace{5pt}
	\subfloat[Convergence Analysis of $\|\nabla f(X_k)\|$ in Example \ref{ex3} and results shown in Table \ref{Table2}]{%
		\resizebox*{5cm}{!}{\includegraphics{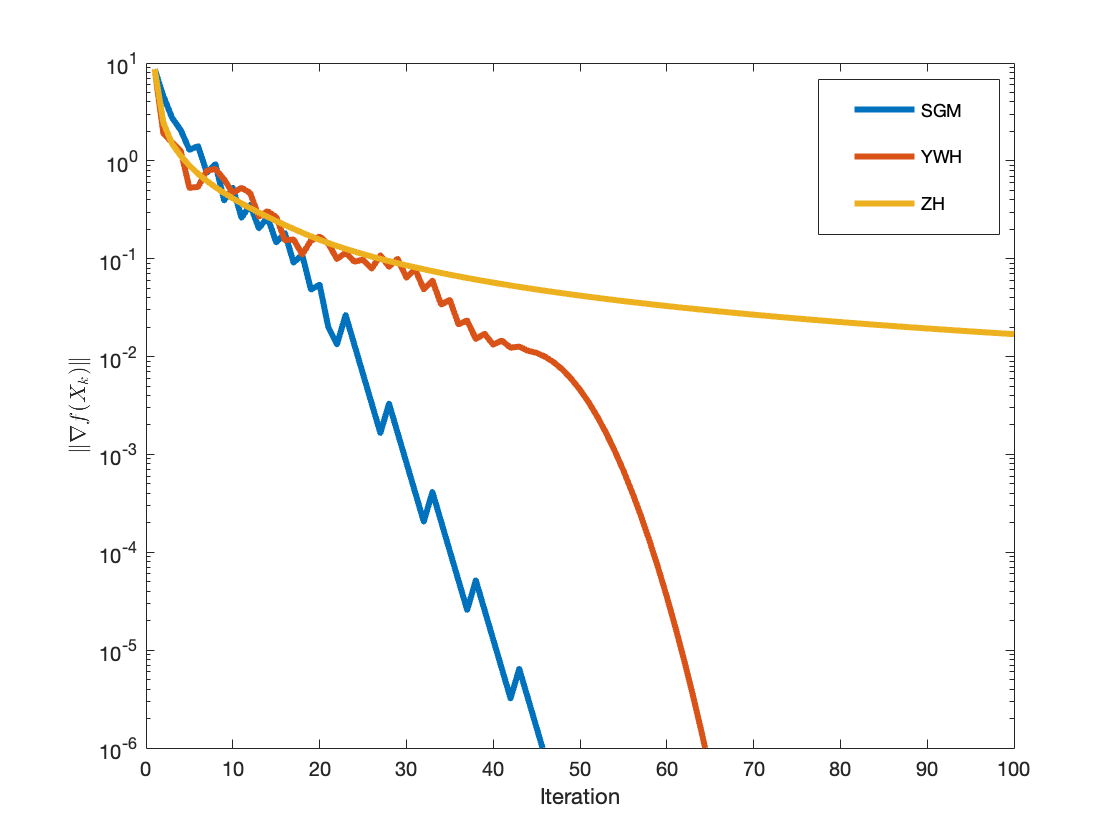}}}
	\caption{Convergence Analysis of Example \ref{ex3}} \label{fig:convergence}
\end{figure}
		
In conclusion, SGM demonstrates effective convergence behaviour
with a scaling matrix to optimize a convex objective function.
As presented in Table~\ref{Table2}, the numerical results confirm the efficiency of SGM
with the scaling matrix $\mathcal{D}$.

As demonstrated in Table~\ref{Table2}, SGM consistently outperforms YWH and ZH regarding convergence speed, computational efficiency and accuracy.
In both Cases, SGM converges in fewer iterations, requiring only 27 and 45 iterations, respectively, compared to the significantly higher iteration counts for YWH and ZH.

Regarding CPU time, SGM is the most efficient, taking just 0.68 seconds for $\|f(X_k) - f(X^*)\|$ and 0.75
seconds $\|\nabla f(X_k)\|$, whereas YWH and ZH require up to 5.57 seconds to converge.
These results confirm that SGM is faster and more accurate,
making it a highly efficient method for solving large-scale non-convex quadratic problems.
\end{example}

\section{Conclusion}

In this study, we proposed a scaled gradient modified non-monotone line search algorithm
for solving constrained minimization problems of the form \eqref{P1}
and examined its convergence behaviour.
The proposed algorithm integrates a scaling matrix to improve the efficiency of gradient based methods,
achieving faster convergence and higher accuracy.
Our analysis shows that incorporating the scaling matrix in the projection step enhances
the algorithm's adaptability to problem geometry, leading to fewer iterations
and reduce computational time compare to existing methods such as YWH and ZH.
Specifically, the proposed method exhibits superior performance in high-dimensional problems
where the curvature of the objective function varies significantly.
Moreover, numerical experiments demonstrated that the proposed algorithm consistently outperforms
the YWH and ZH methods in both convergence speed and error reduction,
particularly, in solving large-scale fractional and quadratic problems.
By leveraging the scaling matrix, our method achieves a linear convergence rate for strongly quasiconvex functions,
surpassing the slower $\mathcal{O}(1/k)$ convergence rates observed in the traditional methods.
Overall, this research highlights the benefits of scaling matrices in non-monotone line search methods,
offering significant improvements in computational efficiency, accuracy, and convergence for large-scale,
high-dimensional optimization problems.

\section*{Acknowledgment}
The research part of the first and second author was done during their visit to
the Center for General Education, China Medical University, Taichung 40402, Taiwan.
				
\section*{Funding}
No funding agency.

\section*{Availability of data and materials}
Not applicable.

\section*{Declarations}

The authors declare no competing interests.


\end{document}